\documentclass[11pt,a4paper]{article}

\usepackage[T1]{fontenc}
\usepackage[utf8]{inputenc}
\usepackage{lmodern}
\usepackage[margin=30mm]{geometry}
\usepackage{amsmath,amssymb,amsthm,mathtools}
\usepackage{microtype}
\usepackage{enumitem}
\usepackage{xcolor}
\usepackage{aliascnt}
\usepackage{hyperref}
\hypersetup{colorlinks=true,linkcolor=blue!45!black,citecolor=blue!45!black,
  urlcolor=blue!45!black,pdftitle={Lower curvature bounds on metric surfaces via the Bishop--Gromov inequality},
  pdfauthor={Alexander Lytchak and Stephan Stadler}}
\usepackage[nameinlink,noabbrev,capitalise]{cleveref}
\setlist[enumerate]{label=\textup{(\roman*)},leftmargin=*,itemsep=3pt}
\numberwithin{equation}{section}
\newtheorem{theorem}{Theorem}[section]
\newaliascnt{proposition}{theorem}
\newtheorem{proposition}[proposition]{Proposition}
\aliascntresetthe{proposition}
\newaliascnt{lemma}{theorem}
\newtheorem{lemma}[lemma]{Lemma}
\aliascntresetthe{lemma}
\newaliascnt{corollary}{theorem}

\aliascntresetthe{corollary}
\theoremstyle{definition}
\newaliascnt{definition}{theorem}

\aliascntresetthe{definition}
\newaliascnt{remark}{theorem}
\newtheorem{remark}[remark]{Remark}
\aliascntresetthe{remark}

\newcommand{\R}{\mathbb R}

\newcommand{\D}{\mathbb D}
\newcommand{\HH}{\mathcal H}
\newcommand{\LL}{\mathcal L}
\newcommand{\eps}{\varepsilon}
\newcommand{\length}{\operatorname{length}}

\newcommand{\apmd}{\operatorname{ap\,md}}
\newcommand{\essliminf}{\operatorname*{ess\,liminf}}
\newcommand{\loc}{\mathrm{loc}}
\newcommand{\CBB}{\operatorname{CBB}}
\newcommand{\vk}{v_\kappa}
\newcommand{\Vk}{V_\kappa}
\newcommand{\dd}{\,d}

\title{Lower curvature bounds on metric surfaces\newline via the Bishop--Gromov inequality}
\author{Alexander Lytchak \and Stephan Stadler}
\date{}

\begin{document}
\maketitle
\begin{abstract}
We characterize lower curvature bounds on metric surfaces   by the Bishop--Gromov inequality for metric balls.
\end{abstract}

\section{Introduction}
For a surface $M$ with a smooth Riemannian metric and any point $x\in M$
with Gaussian curvature $\kappa=\kappa(x)$, the area of small balls
$B_r(x)$ of radius $r$ around $x$ can be expressed as
\[
 \mathcal H^2(B_r(x))
 =\pi r^2-\frac{\pi\kappa}{12}r^4+O(r^6)
 =v_\kappa(r)+O(r^6),
\]
where $v_\kappa(r)$ denotes the area of a ball of radius $r$ in the
complete simply connected surface of constant curvature $\kappa$;
see \cite[Equation~(1.2)]{GrayVanhecke}.

Thus, if for a fixed $\kappa\in\R$ and every $x\in M$ the condition
\begin{equation}\label{eq:main}
 \mathcal H^2(B_r(x))\leq v_\kappa(r)
\end{equation}
holds for all sufficiently small $r$, then the curvature of $M$ is bounded below by
$\kappa$ on all of $M$. On the other hand, if the curvature of $M$
is bounded below by $\kappa$, then \eqref{eq:main} is true for all
$x$ and all $r$ such that the closure of $B_{2r}(x)$ is compact in
$M$, due to the theorem of Bishop--Gromov; see
\cite[Theorem~10.2]{BGP}.

We extend this equivalence between upper bounds on the areas of balls
and lower curvature bounds (in the sense of Alexandrov) to metric
surfaces without any smoothness assumption. Such a statement requires
excluding Finsler structures.
Indeed, on any non-Euclidean normed plane, which has no curvature
bounds in the sense of Alexandrov, the Hausdorff area of any ball
of radius $r$ is $\pi r^2$; see \cite[Section~3.1]{Alvarez}.
Finsler structures may be excluded by changing the reference measure
$\mathcal H^2$ to an appropriate variant (e.g., the inscribed
Riemannian area introduced by Ivanov \cite[Example~4.4]{Ivanov} or a Hilbert-type
area introduced by Gromov \cite[Section~8]{GromovHilbert}). We retain
Hausdorff area and exclude Finsler structures by 
a direct
infinitesimal Euclidean assumption (ET) introduced in
\cite[Definition~11.1]{LWArea}; see also \cref{rm:rm}.

\begin{theorem}\label{thm:main}
Let $X$ be a length space homeomorphic to a surface without boundary
and let $\kappa\in\R$. Then $X$ has curvature locally bounded below
by $\kappa$ if and only if it satisfies \textup{(ET)} and, for every
compact $K\subset X$, there exists $r_K>0$ such that
\begin{equation}\label{eq:main-ball}
 \HH_X^2(B_r^X(x))\leq v_\kappa(r)
 \qquad\text{for all }x\in K\text{ and }0<r<r_K.
\end{equation}
\end{theorem}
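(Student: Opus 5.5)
The forward direction is essentially classical. Assume $X$ has curvature locally bounded below by $\kappa$. A two-dimensional Alexandrov space has Euclidean tangent cones at almost every point; in fact all its tangent cones are Euclidean cones of total angle at most $2\pi$. Combined with the definition in \cite[Definition~11.1]{LWArea}, this should give (ET). For \eqref{eq:main-ball}, I would cover $K$ by finitely many open sets $U$ on which a $\CBB(\kappa)$ comparison holds, so that for $x\in K$ and $r$ small the ball $\bar B_{2r}(x)$ lies in a convex region with curvature $\ge\kappa$. The Bishop--Gromov volume comparison for Alexandrov spaces \cite[Theorem~10.2]{BGP} then gives that $\HH^2(B_r(x))/v_\kappa(r)$ is nonincreasing in $r$. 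As $r\to0$ this ratio tends to $\theta(x)/2\pi\le 1$, where $\theta(x)$ is the total angle at $x$. Hence \eqref{eq:main-ball} holds.

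For the converse I would use the Gauss--Bonnet/curvature-measure structure of surfaces with bounded integral curvature. I would first show that $X$ has locally finite area and that \eqref{eq:main-ball}, together with (ET), forces $X$ to be a surface of bounded curvature in the sense of Alexandrov--Zalgaller. The plan is to apply the results of \cite{LWArea}: (ET) plus an upper area bound of balls gives quasiconformal parametrizations by Euclidean disks, and $\HH^2$ is comparable to the energy of these maps.

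The \textbf{main step}, and the expected obstacle, is to pass from area control to curvature control. Here I would work with the excess $\pi r^2-\HH^2(B_r(x))$, or rather $v_\kappa(r)-\HH^2(B_r(x))\ge0$, and aim to show that the curvature measure $\omega$ satisfies $\omega\ge\kappa\,\HH^2$ as measures. For a BIC surface one has the coarea expansion $\HH^2(B_r(x))=\int_0^r \length(\partial B_t(x))\,dt$. The length of the circle is controlled by the turn of the boundary, so that $\length(\partial B_t)\approx t\,(2\pi-\omega(B_t(x)))+\dots$ in the flat-model approximation. Integrating and using $\HH^2(B_r)\le v_\kappa(r)$ yields, at every point $x$, a mean-value inequality of the form
\[
\int_0^r (r-t)\,\omega(B_t(x))\,dt\;\gtrsim\;\kappa\int_0^r(r-t)\,\HH^2(B_t(x))\,dt-o(r^4).
\]
From this I would argue by contradiction. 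If $\omega^-$, or more precisely $(\omega-\kappa\HH^2)^-$, is nonzero, then by the Lebesgue differentiation/density theorem there is a point where the negative part dominates at all small scales. Applying the inequality at such a point gives a contradiction; the case of a point atom with negative mass (total angle $>2\pi$) is handled separately, since there the area of small balls is approximately $\theta r^2/2>\pi r^2$.

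Once $\omega\ge\kappa\HH^2$ locally, Alexandrov's theorem (a BIC surface with $\omega\ge\kappa\,\mathrm{area}$ has curvature bounded below by $\kappa$) completes the proof. The delicate part is making the turn/length estimate for distance circles rigorous in the BIC setting, and ensuring that positive concentrated curvature nearby does not mask negative curvature. For this I would use the density argument on the absolutely continuous part and on the singular part separately.
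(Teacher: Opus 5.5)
Your forward direction is fine and agrees with the paper. The converse, however, has a genuine gap exactly where the paper does its main work. You assert that (ET), quasiconformal charts and the ball-area bound force $X$ to be a surface of bounded integral curvature, but you give no mechanism for this. Non-Euclidean normed planes admit quasiconformal (even bi-Lipschitz) charts and satisfy \eqref{eq:main-ball} with $\kappa=0$, so in your plan (ET) never actually enters. It has to be used to obtain \emph{conformal} charts with $\apmd\Phi=f|\cdot|$. The area bound must then be converted into the statement that $\Delta\log f$ is a measure with $\Delta\log f+\kappa f^2\le 0$. In the paper this is all of \cref{sec:radial,sec:representative}: positivity of $f$, monotonicity of $H_x+\Vk$, the lower semicontinuous representative, the lower-test inequality, and the passage to distributions. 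It is followed by \cref{sec:identification}, which applies Reshetnyak's theory to $e^{2W}|dz|^2$ and identifies this metric with $d_X$. That identification is not automatic, because the chart matches lengths only along $2$-almost every curve; it uses the ball-area bound a second time (\cref{prop:area-rigidity}). Atoms $\mu(\{z\})\ge 2\pi$ for $\kappa<0$ also need separate treatment. None of this is addressed.

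Second, even granting a BIC structure, your circle-length step points the wrong way. In a BIC surface the turn (Gauss--Bonnet) estimate for distance circles is one-sided: $\length(\partial B_t(x))\le\int_0^t\bigl(2\pi-\omega(B_s(x))\bigr)\dd s$. The inequality is strict once cut locus appears, which already happens behind a single positive cone point near $x$. Integrating gives $\HH^2(B_r(x))\le\pi r^2-\int_0^r(r-t)\,\omega(B_t(x))\dd t$. This is an \emph{upper} bound on area, the same direction as the hypothesis, so combining the two says nothing about $\omega$ from below. Your mean-value inequality needs the reverse estimate with an $o(r^4)$ error. You do not prove it, and the general theory only supplies first-order asymptotics such as $\HH^2(B_r(x))/r^2\to\pi-\tfrac12\omega(\{x\})$.

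The missing idea is an explicit \emph{lower} bound for the area of metric balls. In the paper, radial segments of the conformal chart whose $X$-length is less than $r$ stay in $B_r^X(\Phi(x))$, so $A_x(r)\le\HH^2_X(B^X_r(\Phi(x)))\le\vk(r)$. The square inequality $tf^2+Q^2/t\ge 2Qf$ turns this into monotonicity of $H_x+\Vk$ at almost every center. Taking $W=\sup_r(H_x+\Vk)$ then gives $H_x(r)+\Vk(r)\le W(x)$ at \emph{every} point. That yields the sharp inequality against lower tests, with no density argument and no risk of concentrated positive curvature masking negative curvature.
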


Somewhat surprisingly, the length-metric assumption can largely be  dispensed with, as explained in
Appendix~\ref{app:intrinsic}. If $\kappa\geq0$ and a metric surface
$(X,d)$ satisfies property~(ET) and \eqref{eq:main-ball}, then its
induced length metric $d_{\mathrm{length}}$, considered on each
connected component, has curvature locally bounded below by $\kappa$.
For $\kappa<0$, the induced extended length metric may instead have
two finite-distance components: a point and a hyperbolic cusp of
constant curvature $\kappa$, at infinite distance from each other.
This cusp phenomenon also accounts for some of the difficulties in
the proof of \cref{thm:main}; see \cref{sec:cusps}.

\begin{remark}\label{rm:rm}
For length surfaces $X$ with locally finite Hausdorff $2$-measure,
property~(ET), recalled in \cref{sec:preliminaries}, is equivalent
to each of the following conditions:
\begin{itemize}
\item $(X,d,\HH_X^2)$ is infinitesimally Hilbertian, that is,
$W^{1,2}(X,d,\HH_X^2)$ is a Hilbert space. For this notion, see
\cite[Definition~4.19]{Gigli} and \cite[Corollary~6.7]{Soultanis}.
\item Every $2$-rectifiable subset of $X$ has a Euclidean approximate
tangent space in the sense of \cite[Section~5]{AK} at
$\HH_X^2$-almost every point.
\end{itemize}
These equivalences are not used in the proof of \cref{thm:main},
and we omit their proofs.
\end{remark}

The  proof follows a strategy similar to those used in \cite{LS} and \cite{LWcharacterization}. We first find
a conformal parametrization of a small neighborhood in $X$, relying on
\cite{Rajala,LWCanonical,MW,NR}, and then determine the analytic properties of
the conformal factor $f$ imposed by \eqref{eq:main-ball}. A delicate
analytic argument turns this condition into the inequality
\begin{equation} \label{eq:intro}
 \Delta\log f+\kappa f^2\leq0
\end{equation}
in the sense of distributions. Subsequently, using the theory of
surfaces of bounded integral curvature developed by Alexandrov, Zalgaller
and Reshetnyak \cite{AZ,Reshetnyak}, we verify that the conformal factor
$f$ determines the metric of $X$ everywhere, rather than merely almost
everywhere, and that this metric has curvature locally bounded below.

Unlike in the related paper \cite{LS}, here one needs the a priori topological assumption that $X$ is a surface without boundary. Indeed, a triangle in the plane with one side
slightly bent inward, equipped with its intrinsic length metric,
satisfies the area assumptions of Theorem~\ref{thm:main} for $\kappa=0$,
but is not locally $\CBB(0)$.

After the preliminaries, \cref{sec:conformal-charts} constructs local
conformal parametrizations of the space $X$.  Section
\ref{sec:radial} and
\cref{sec:representative}, the technical heart of the paper, investigate the conformal factor and  establish the analytic curvature inequality \eqref{eq:intro}. \cref{sec:identification}  upgrades the analytic structure to a metric one, while \cref{sec:cusps} completes the proof by extending across possible cusps in negative curvature.
Appendix~\ref{app:intrinsic} discusses non-intrinsic metrics.

\subsection*{Acknowledgments}
This paper arose in close collaboration with ChatGPT, an AI assistant
developed by OpenAI. The collaboration involved exploring proof
strategies, refining arguments, locating references, and drafting
and revising the manuscript. The authors take full responsibility
for the mathematical content and the final text.
The first author was supported by the Deutsche Forschungsgemeinschaft
(DFG, German Research Foundation) under Germany's Excellence
Strategy -- EXC-2047/1 -- 390685813.

\section{Preliminaries}\label{sec:preliminaries}

%\subsection{Metric surfaces}
We assume familiarity with the basic concepts of metric geometry and
refer the reader to \cite{BBI,AKP}. Here we fix notation and terminology.

In a metric space, we denote by $B_r(x)$ the open ball of radius $r$
centered at $x$. Hausdorff measure is normalized so that $\HH^2$ on the
Euclidean plane agrees with Lebesgue measure $\LL^2$. A \emph{metric
surface} is a metric space homeomorphic to a two-dimensional manifold
without boundary. A length space is a metric space in which distances
are infima of lengths of curves. A geodesic space is one in which these
infima are attained. A locally compact length space is locally geodesic:
every point has a neighborhood any two points of which can be joined
by a minimizing curve in the ambient space; see \cite[Section~2.5]{BBI}.

We write $\CBB(\kappa)$ for the Alexandrov lower curvature bound
$\kappa$, and locally $\CBB(\kappa)$ for the corresponding local
condition. We refer to \cite{AKP} for the definitions and theory.

%\subsection{Almost every curve and Sobolev maps}\label{subsec:sobolev}
All metric surfaces will be considered with $\HH^2$ as the reference measure.
A property holds for \emph{$2$-almost every rectifiable curve}
in a metric surface $Y$ if there exists a Borel function
$h:Y\to[0,\infty]$ with $h\in L^2(Y)$ such that
\[
 \int_\gamma h\dd s=\infty
\]
for every rectifiable curve $\gamma$ for which the property fails.
The family of such exceptional curves is called \emph{$2$-exceptional}.
Countable unions of $2$-exceptional families are
$2$-exceptional.
For this and other properties, see \cite{HeinonenKST} and
\cite[Subsection~3.1]{LWIntrinsic}.

We denote by $\D$ the open unit disc in $\R^2$. For a planar domain
$D$ and a complete metric space $Z$, we write $W^{1,2}(D,Z)$ for the
space of metric-valued Sobolev maps; see \cite{HeinonenKST},
\cite[Section~3]{LWArea} and \cite[Subsection~3.2]{LWIntrinsic}.
In assertions about almost every curve, a Newton--Sobolev
representative is understood. A continuous Sobolev map is already such
a representative; see \cite[Subsection~3.2]{LWIntrinsic}.

Every $u\in W^{1,2}(D,Z)$ has an approximate metric differential
$\apmd u_z$, a seminorm on $\R^2$, at almost every $z\in D$;
see \cite[Proposition~4.3]{LWArea}. There is a decomposition
\[
 D=N\sqcup\bigcup_{i=1}^{\infty}K_i,
 \qquad \LL^2(N)=0,
\]
where the $K_i$ are pairwise disjoint compact sets and each restriction
$u|_{K_i}$ is Lipschitz. At almost every $z\in K_i$, its metric
differential in the sense of \cite{Kirchheim} agrees with $\apmd u_z$;
see \cite[Subsection~3.3]{LWIntrinsic}.

For a possibly noncomplete metric space $Z$ with completion $\widehat Z$,
we define $W^{1,2}(D,Z)$ as the maps in $W^{1,2}(D,\widehat Z)$
with values in $Z$ almost everywhere. Approximate metric differentials,
energies and Newton--Sobolev representatives are understood in
$\widehat Z$.

A  metric space $Z$ has property~(ET) if, for every
$u\in W^{1,2}(\D,Z)$, the approximate metric differential $\apmd u_z$
is a Euclidean seminorm for almost every $z\in\D$.

A map $u\in W^{1,2}(D,Z)$ is called \emph{weakly conformal} if
$\apmd u_z=f(z)|\cdot|$ almost everywhere, for a nonnegative function
$f\in L^2(D)$.

\section{Conformal parametrization}\label{sec:conformal-charts}
The following result is a consequence of metric uniformization theorems
proved in various settings in
\cite{Bonk-Kleiner,Rajala,LWCanonical,MW,NR}.

\begin{proposition}\label{prop:charts}
Let $X$ be a length metric surface satisfying property~\textup{(ET)}
and \eqref{eq:main-ball}. For every $x\in X$ there is a topological embedding $\Phi:\D\to X$ with $\Phi(0)=x$ such that
\begin{enumerate}
\item $\Phi\in W^{1,2}(\D,X)$ and
$\apmd\Phi_z=f(z)|\cdot|$ almost everywhere, where
$f\in L^2(\D)$ is positive almost everywhere;
\item for $2$-almost every rectifiable curve $\gamma$ in $\D$,
\[
 \length_X(\Phi\circ\gamma)=\int_\gamma f\dd s,
\]
and for every Borel set $E\subset\D$,
\[
 \int_E f^2\dd z\leq\HH_X^2(\Phi(E)).
\]
\end{enumerate}
\end{proposition}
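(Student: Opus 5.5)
The plan is to derive the proposition from the uniformization theory for metric surfaces of locally finite area. First, \eqref{eq:main-ball} gives local finiteness of area. Cover a compact neighborhood $K$ of $x$ by finitely many balls of radius less than $r_K$. Then $\HH_X^2(K)\le N\,v_\kappa(r_K)<\infty$, so $X$ has locally finite Hausdorff $2$-measure. Next, choose a small closed Jordan domain $\Omega\subset X$ containing $x$ in its interior whose boundary is a rectifiable Jordan curve. This is possible in a locally geodesic surface, for instance by taking a geodesic polygon close to a small circle around $x$.

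We then apply \cite{LWCanonical} in the form refined by \cite{NR} (Ntalampekos--Romney: every metric surface of locally finite area admits a weakly quasiconformal parametrization). This yields a continuous, monotone, surjective map $u\colon\overline{\D}\to\Omega$ with $u\in W^{1,2}$, infinitesimally quasiconformal. Property (ET) upgrades this to a genuinely conformal map. As in \cite[Theorem~11.3]{LWArea}, the energy-minimizing parametrization $u$ of the Jordan curve $\partial\Omega$ has $\apmd u_z$ Euclidean a.e. by (ET). Minimality of the Reshetnyak energy among reparametrizations then forces $\apmd u_z=f(z)|\cdot|$, so $u$ is weakly conformal.

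The main obstacle is showing that $u$ is a homeomorphism on $\D$, not merely monotone, since $u$ could collapse continua. Here I would use the length-metric and area assumptions. By the Lytchak--Wenger criterion, injectivity follows once $X$ is locally reciprocal (\cite{Rajala,NR}), that is, once the modulus of curve families through a point vanishes. It suffices to show $\HH_X^2(B_r(y))\le Cr^2$ for small $r$, uniformly near $x$; this is exactly \eqref{eq:main-ball}, because $v_\kappa(r)\le Cr^2$. Upper Ahlfors $2$-regularity gives the point-modulus condition by the standard estimate with test function $\rho=\frac{1}{|\log r|}\cdot\frac{1}{d(y,\cdot)}$ on dyadic annuli. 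Reciprocality then holds (this is \cite[Theorem~1.6]{Rajala} together with \cite{MW}), so $u|_{\D}$ is a homeomorphism onto the interior of $\Omega$. Precomposing with a Möbius map arranges $\Phi(0)=x$; this preserves conformality.

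It remains to establish (ii) and positivity of $f$. Since $\Phi$ is a continuous Sobolev map, it is its own Newton--Sobolev representative, and $\rho=f$ is a minimal weak upper gradient. Hence $\length(\Phi\circ\gamma)\le\int_\gamma f$ for $2$-a.e.\ curve. The reverse inequality holds for a.e.\ curve because the metric differential agrees with $\apmd\Phi$ on the Lipschitz pieces $K_i$, and $\gamma$ meets $N$ in a null length set for $2$-a.e.\ $\gamma$. Together these give equality. The area inequality is the area formula for Sobolev maps with the Jacobian of the seminorm $f|\cdot|$, namely $f^2$, combined with injectivity: $\int_E f^2=\HH^2(\Phi(E\setminus N))\le\HH^2(\Phi(E))$. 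For positivity, suppose $f=0$ on a set $A$ of positive measure. Then the conformal inverse map, which has finite energy, would have unbounded distortion. I would instead argue via the reverse estimate of \cite[Theorem~1.2]{NR}: $\HH^2(\Phi(E))\le C\int_E f^2$ holds for reciprocal surfaces, and $\Phi$ satisfies Lusin's (N$^{-1}$) property, which excludes $f=0$ on a set of positive measure.
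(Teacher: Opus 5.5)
Your outline follows the paper's proof closely: a small Jordan domain with rectifiable boundary, the Reshetnyak-energy minimizer, weak conformality from (ET) and minimality, a homeomorphism from the quadratic area bound, M\"obius normalization, the length identity from Newton--Sobolev theory, and the area formula plus injectivity. Obtaining injectivity through reciprocality, instead of citing \cite[Theorem~3.6]{LWCanonical} directly, is a legitimate variant.

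The genuine gap is the positivity of $f$. You correctly see that what is needed is Lusin's property (N) for $\Phi^{-1}$. Take $E\subset f^{-1}(0)$ of positive measure inside the union of the Lipschitz pieces. Then the area formula and injectivity give $\HH_X^2(\Phi(E))=\int_E f^2\dd z=0$, and (N) for the inverse forces $|E|=0$. But you only assert that $\Phi$ has property (N$^{-1}$). The estimate $\HH^2(\Phi(E))\le C\int_E f^2$ that you quote is property (N) for $\Phi$ itself, which is the wrong direction. It adds nothing beyond what the area formula already gives. The remark that the inverse would have ``unbounded distortion'' is not an argument either. Property (N) for the inverse does not follow from weak conformality and injectivity alone. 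It is exactly the point where quasiconformality, and hence reciprocality, must be used.

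The paper closes this step as follows. The quadratic area bound, together with \cite[Theorem~1.6]{Rajala}, \cite[Proposition~3.5]{LWCanonical} and \cite[Proposition~3.3]{MW}, shows that $u|_{\D}$ is locally geometrically quasiconformal. One modulus inequality comes from weak conformality, the length identity and the one-sided area formula; the reverse one, up to a constant, is where reciprocality enters. Then \cite[Theorem~6.20]{NtalampekosSurvey} gives property (N) for $u^{-1}$ on the interior, and $u^{-1}(\partial Y)=\partial\D$ is Lebesgue-null. You already have reciprocality, so the repair is to make this chain explicit.

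Three smaller points:
\begin{enumerate}
\item A ``geodesic polygon close to a small circle'' need not be a Jordan curve bounding a disc around $x$ in a general length surface, so cite \cite[Lemma~5.1]{CreutzRomney}.
\item The paper applies the parametrization results to $\Omega$ with its intrinsic metric, a geodesic disc $Y$. This requires checking that $Y$ inherits (ET), using that the metrics agree locally in the interior and $\HH_Y^2(\partial Y)=0$.
\item The map $u$ must be the energy minimizer, not an arbitrary weakly quasiconformal parametrization, since the upgrade to conformality uses minimality.
\end{enumerate}
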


\begin{proof}
By \cite[Lemma~5.1]{CreutzRomney}, we find an arbitrarily small,
compact neighborhood $\Omega$ of $x$ homeomorphic to $\overline \D$ with
rectifiable boundary $\partial\Omega$. Then $\Omega$
with its intrinsic metric is a compact geodesic disc $Y$ with the same
Hausdorff area and curve lengths as in $X$; see \cite[Lemma~2.1]{LWCanonical}. By compactness, the bound
\eqref{eq:main-ball} gives
$\HH_Y^2(B_r^Y(y))\leq Cr^2$ for some $C>0$, all $y\in Y$ and all $r>0$.

By \cite[Theorems~1.3, 1.4, 2.3]{MW}, there is a continuous map
$u:\overline\D\to Y$ whose restriction to $\partial\D$ is a weakly
monotone parametrization of $\partial Y$ and whose restriction to $\D$ lies in  $W^{1,2}(\D,Y)$ and minimizes the so-called \emph{Reshetnyak energy}
among all fillings of $\partial Y$.

Since the metrics agree locally in the interior of $\Omega$ and
$\HH_Y^2(\partial Y)=0$, $Y$ inherits property~(ET) from $X$, as in
\cite[Section~4.5]{LS}. Hence \cite[Theorem~11.4]{LWArea} implies that
$u$ is weakly conformal.

The map $u$ is surjective and monotone, i.e., the preimage of every
point of $Y$ is connected, by
\cite[Theorem~1.2 and Proposition~2.9]{LWCanonical}. The quadratic area bound implies that $u$ is a homeomorphism, due to 
\cite[Theorem~3.6]{LWCanonical}.

The inverse map $u^{-1}:Y\to\overline\D$ has Lusin's property~(N),
meaning that it maps $\HH_Y^2$-null sets to $\LL^2$-null sets.
Indeed, the quadratic area bound and \cite[Theorem~1.6]{Rajala},
together with \cite[Proposition~3.5]{LWCanonical} and
\cite[Proposition~3.3]{MW}, applied on compact subdiscs of $\D$,
imply that $u|_{\D}:\D\to\operatorname{int}Y$ is locally
geometrically quasiconformal.
By \cite[Theorem~6.20]{NtalampekosSurvey}, $u^{-1}$ therefore has
property~(N) on $\operatorname{int}Y$. Since
$u^{-1}(\partial Y)=\partial\D$ has Lebesgue measure zero,
property~(N) holds on all of $Y$.

Precomposing with
a M\"obius transformation of $\overline\D$, we may assume that $u(0)=x$.

Let $\iota:Y\to X$ be the canonical $1$-Lipschitz identification and set
$\Phi=\iota\circ u|_{\D}$. Then $\Phi$ is a topological embedding in
$W^{1,2}(\D,X)$ with $\Phi(0)=x$. Since $\iota$ is locally isometric
in the interior, $\apmd\Phi_z=f(z)|\cdot|$ almost everywhere for a
nonnegative $f\in L^2(\D)$. The asserted length identity follows from
\cite[Lemma~3.1]{LWIntrinsic}.

Let $A\subset\D$ be a countable union of compact pieces on which $\Phi$ is Lipschitz with $\mathcal L^2 (\D\setminus A)=0$.
Injectivity and the metric area formula
\cite{Kirchheim} imply 
\begin{equation} \label{eq:E}
\int_E f^2\dd z
=\HH_X^2\bigl(\Phi(E\cap A)\bigr)
\leq\HH_X^2\bigl(\Phi(E)\bigr),
\end{equation}
 for every Borel set $E\subset\D$.

It remains to verify that  $f$ is positive almost
everywhere. Otherwise, we could find a compact subset $E$ of positive measure in $f^{-1} (0) \cap A$, where $A\subset\D$ is as above. The metric area formula \eqref{eq:E}  would imply that $\Phi (E)=u(E)$ had $\mathcal H^2$-measure $0$. Hence, the inverse map $u^{-1}$ would not satisfy Lusin's property (N). This contradiction finishes the proof.  
\end{proof}

Let $\Phi,f$ be as in \cref{prop:charts}. We call such maps $\Phi$
\emph{charts} and the associated functions $f$ \emph{conformal
	factors}. If $h$ is a conformal diffeomorphism of parameter domains,
then $\Phi\circ h$ is again a chart, with conformal factor
$(f\circ h)|h'|$. Since $f>0$ almost everywhere, we can choose a Borel
representative of $f$ that is positive everywhere. By \cite[Proposition~4.10]{LWArea}, applied to
parallel-line coordinates, and Fubini's theorem, for almost every
$x\in\D$ and almost every $\theta\in S^1$, the curve
$t\mapsto\Phi(x+t\theta)$ is absolutely continuous and has finite
length on every interval $[0,T]$ with $[x,x+T\theta]\subset\D$,
and satisfies the length identity in \cref{prop:charts} on every
subsegment. Also, by polar integration, $f$ is finite
almost everywhere along almost every such ray.

We call such an $x$ a \emph{good center}, and choose a Borel
full-measure set of \emph{good directions} at $x$ on which all these
properties hold.

\section{Conformal factor: uniform positivity}\label{sec:radial}
We fix some notation and analyze the conformal factor $f$ from
\cref{prop:charts} more closely in this and the next section.
All arguments are local.
We fix a compact set
$K\Subset \D$ and choose $R>0$ such that the $3R$-neighborhood
of $K$ is contained in $\D$. If necessary, enlarge $K$ slightly, so that
the estimates obtained on $K$ hold on a neighborhood of each of its
points. All constants below are uniform in $x\in K$.

Compactness and injectivity of $\Phi$ imply
\begin{equation}\label{eq:separation}
 \inf_{x\in K,\,\theta\in S^1}
 d_X\bigl(\Phi(x),\Phi(x+R\theta)\bigr)>0.
\end{equation}
Choose $r_0>0$ such that 
$2r_0$ is smaller than this infimum and smaller than $R$, such
that the ball-area assumption applies at every $\Phi(x)$, $x\in K$,
for $0<r<2r_0$.

For a good center $x\in K$ and a good direction $\theta\in S^1$, set
\[
 Q_{x,\theta}(t)=\int_0^t f(x+s\theta)\dd s,
 \qquad0\leq t\leq R.
\]
This is the length of the curve $s\mapsto\Phi(x+s\theta)$,
$0\leq s\leq t$. The function $Q_{x,\theta}$ is absolutely
continuous and strictly increasing. Since $Q_{x,\theta}(R)\geq 2r_0$,
define
\begin{equation}\label{eq:inverse}
 \tau_{x,\theta}(r)=Q_{x,\theta}^{-1}(r),
 \qquad0<r<2r_0.
\end{equation}
For each good center $x$, the functions $Q_{x,\theta}(t)$ and
$\tau_{x,\theta}(r)$ are Borel measurable in $(\theta,t)$ and
$(\theta,r)$, respectively. Set $\tau_{x,\theta}(r)=R$ on the
negligible set of bad directions.

For a good center $x$ define
\begin{equation}\label{eq:radial-area}
 A_x(r)=\int_{S^1}\int_0^{\tau_{x,\theta}(r)}
                   f(x+t\theta)^2t\dd t\dd\theta.
\end{equation}
The  radial star
$\{x+t\theta:0\leq t<\tau_{x,\theta}(r),\ \theta\text{ good}\}$
is mapped by $\Phi$ into $B_r^X(\Phi(x))$, since its initial radial
segments have length less than $r$. Thus the area inequality in
\cref{prop:charts} and the ball-area bound give
\begin{equation}\label{eq:A-upper}
 A_x(r)\leq\vk(r).
\end{equation}

Finally, choose $C\geq0$, with $C=0$ if $\kappa\geq0$, such that
\begin{equation}\label{eq:coarse-area-bound}
 \vk(t)\leq\pi t^2(1+Ct^2),\qquad0<t\leq r_0.
\end{equation}

The arguments in this and the next section exploit the relation between
radial length and area. To explain the mechanism, consider  a good center. Along each ray, the accumulated length
\(Q_\theta\) satisfies \(Q_\theta'=f\), and
\[
t f(t\theta)^2+\frac{Q_\theta(t)^2}{t}
-2Q_\theta(t)f(t\theta)
=\frac{\bigl(tf(t\theta)-Q_\theta(t)\bigr)^2}{t}\geq0.
\]
Equality holds precisely when \(Q_\theta(t)/t\) is constant, that is,
when the conformal factor is constant along the ray. Integrating up to
\(Q_\theta=r\) and then over all directions gives
\[
A_x(r)+
\int_{S^1}\int_0^{\tau_{x,\theta}(r)}
\frac{Q_{x,\theta}(t)^2}{t}\,dt\,d\theta
\geq 2\pi r^2.
\]
Thus an upper bound on the area of the radial star gives a lower bound
on the second integral, with any variation of the conformal factor
along the rays contributing with the favorable sign.

We use this observation in two ways. First, applying the same square
inequality to a truncated logarithm of \(Q_\theta\), we compare its
radial energy with the area bound. At a Lebesgue center,
\(Q_\theta(t)/t\) approaches \(f(x)\) in angular average. If \(f(x)\)
were too small, reaching a fixed positive radial length within the
coordinate neighborhood would require more energy than the area
bound permits. This yields a locally uniform positive lower bound
for \(f\).

After normalizing \(f\geq1\), we introduce radial averages \(H_x(r)\)
which equal \(\log f\) for constant conformal factors. Their derivative
contains exactly the second integral above. The same inequality then
gives monotonicity after a curvature-dependent correction. This
provides a lower semicontinuous representative of \(\log f\) and
allows us to recover the sharp curvature inequality by comparison
with smooth lower tests.

The proof of the following result relies only on   \eqref{eq:A-upper}.

\begin{proposition}\label{prop:positivity}
Let $\Phi,f$ be as in \cref{prop:charts}. Then
$f^{-1}\in L^\infty_{\loc}(\D)$.
\end{proposition}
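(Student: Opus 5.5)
The plan is to show that at every good center $x\in K$ which is also a Lebesgue point of $f$ (and of $\log f$, suitably truncated), one has $f(x)\geq c$. Here $c=e^{-1/2-o(1)}r_0/R$ depends only on $R$, $r_0$ and $C$. Since such $x$ have full measure, this gives $f^{-1}\in L^\infty(K)$.

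\textbf{Logarithmic variables.} On a good ray put $s=\log t$, so that $f\,t=dQ/ds$ and $f^2t\,dt=(dQ/ds)^2\,ds$. Then \eqref{eq:radial-area} becomes
\[
 A_x(\rho)=\int_{S^1}E_\theta(\rho)\,d\theta,\qquad E_\theta(\rho)=\int_{-\infty}^{\log\tau_{x,\theta}(\rho)}\Bigl(\frac{dQ}{ds}\Bigr)^2 ds .
\]
The essential point is that \eqref{eq:A-upper} is used for \emph{all} $\rho<2r_0$, not only for $\rho=r_0$. With $\bar E(\rho)=\frac1{2\pi}A_x(\rho)$, this gives $\bar E(\rho)\leq\frac{\rho^2}{2}(1+C\rho^2)$.

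\textbf{Key estimate.} Reparametrize each ray by the length level $\rho=Q$. Then $dE_\theta/d\rho=dQ/ds$ and $ds/d\rho=(dE_\theta/d\rho)^{-1}$. Averaging over $\theta$ and using Jensen (convexity of $y\mapsto1/y$) gives
\[
 \frac{d}{d\rho}\,\overline{\log\tau}(\rho)\geq \frac1{\bar E'(\rho)} .
\]
Here the bar denotes the angular mean; the monotone functions involved may be treated as distributions, or the argument run with finite differences. On $[\rho_1,r]$, with $r=r_0$ and $L=\log(r/\rho_1)$, Cauchy--Schwarz gives
\[
 \int_{\rho_1}^r\frac{d\rho}{\bar E'}\geq\frac{L^2}{\int_{\rho_1}^r \bar E'\rho^{-2}\,d\rho}.
\]
Integrating by parts and inserting the upper bound on $\bar E$, the denominator is at most $\frac12(1+Cr^2)+L(1+Cr^2)$. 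Hence
\[
 \overline{\log\tau}(r)-\overline{\log\tau}(\rho_1)\geq L-\tfrac12-o(1)
\]
as $r\to0$, uniformly in $\rho_1$. Finally, $\tau\leq R$ gives $\overline{\log\tau}(r)\leq\log R$.

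\textbf{Behaviour near the center.} It remains to show that, at a Lebesgue center $x$,
\[
 \liminf_{\rho_1\to0}\bigl(\overline{\log\tau}(\rho_1)-\log\rho_1\bigr)\geq-\log f(x).
\]
By Jensen applied to $\log$, it suffices to control $Q_\theta(t)/t$ from above in angular mean. The angular averages of $Q_\theta(t)/t=\frac1t\int_0^tf(x+s\theta)\,ds$ converge to $f(x)$ at a Lebesgue point of $f$, by polar integration. One then combines this with the inequality $\tau_\theta(\rho_1)\geq t$ whenever $Q_\theta(t)\leq\rho_1$.

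I expect this last step to be the main obstacle. The difficulty is that $\log\tau$ is not directly comparable with $Q(t)/t$ ray by ray. To handle it, I would work with the truncated quantity $\min(\log Q_\theta,\log\rho_1)$ at a fixed small $t$ instead of $\tau$, rerun the Jensen/Cauchy--Schwarz step from the level $Q_\theta(t)$ (which varies with $\theta$) rather than from a fixed level, and pass to the limit via Fatou. Altogether this yields $\log f(x)\geq\log(r_0/R)-\frac12-o(1)$, which is the claimed locally uniform positive lower bound.
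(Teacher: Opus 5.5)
Your level-set reformulation of the energy estimate is sound. With $d(\log\tau_\theta)/d\rho=1/E_\theta'$, Jensen and Cauchy--Schwarz do the job of the paper's pointwise inequality $2u'\leq tu'^2+1/t$, and the area bound enters at all levels exactly as in the paper's term $2\int_s^{r_0}A(q)q^{-3}\dd q$. There is a slip in the denominator, though. The $C$-term integrates to $\tfrac C2(r^2-\rho_1^2)$, so the correct bound is $L+\tfrac12+Cr_0^2$, and $L^2/(L+b)\geq L-b$ then gives $L-\tfrac12-Cr_0^2$ uniformly in $\rho_1$. Your stated bound $(L+\tfrac12)(1+Cr^2)$ only gives $(L-\tfrac12)/(1+Cr^2)$. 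Its loss, of order $Cr^2L$, diverges as $\rho_1\to0$, and for $\kappa<0$ this would destroy the final limit. Also, there is no $o(1)$: $r=r_0$ stays fixed and the constant is $e^{-1/2-Cr_0^2}r_0/R$, as in \eqref{eq:uniform-lower-bound}.

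The genuine gap is the near-center step, which you leave open. Your sketched fix is too vague to check, and with the truncation $\min\{\cdot,\log\rho_1\}$ as written it does not remove the problem. Rays with $Q_\theta(\eps)>\rho_1$ still start at the uncontrolled radius $\tau_\theta(\rho_1)$, and rays with $Q_\theta(\eps)<\rho_1$ bring in levels below $\rho_1$, which your denominator bound does not cover. The truncation must be from below. Start each ray at level $\ell_\theta=\max\{\rho_1,\min\{Q_\theta(\eps),r_0\}\}$ with $\rho_1=a\eps$; this level is reached at Euclidean radius at least $\eps$. Per-ray Cauchy--Schwarz and the joint convexity of $(X,Y)\mapsto X^2/Y$ then give $\log(R/\eps)\geq L-2\bar e-\tfrac12-Cr_0^2$. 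Here $\bar e=\overline{\log(\ell_\theta/\rho_1)}\leq\frac1{2\pi a}\int_{S^1}|Q_\theta(\eps)/\eps-a|\dd\theta\to0$ by $\log v\leq v-1$. This is precisely the paper's proof; its linear AM--GM form makes averaging with a $\theta$-dependent inner endpoint trivial.

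Alternatively, your fixed-level claim is true as stated, by a layer-cake argument. For $\lambda>0$ and $t=\rho/(ae^\lambda)$ one has $\{\theta:\log\frac{\rho}{a\tau_\theta(\rho)}>\lambda\}=\{\theta:Q_\theta(t)/t>ae^\lambda\}$. By Chebyshev this set has measure at most $\hat e_t/(a(e^\lambda-1))$, where $\hat e_t=\int_{S^1}|Q_\theta(t)/t-a|\dd\theta$. Bound the layers with $\lambda<\lambda_0$ trivially, integrate the Chebyshev bound over $\lambda\geq\lambda_0$, and use $\sup_{t\leq\rho/a}\hat e_t\to0$. This gives $\limsup_{\rho\to0}\overline{\log(\rho/\tau_\theta(\rho))}\leq\log a$. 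The point you missed is that $\log\tau$ and $Q(t)/t$, though not comparable ray by ray, have matching superlevel sets. Each of these is controlled by the $L^1$ convergence at its own scale.
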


\begin{proof}
Fix a good center $x\in K$ which is also a Lebesgue point of $f$,
and put $a=f(x)\in(0,\infty)$. We will verify the explicit lower bound
\begin{equation}\label{eq:uniform-lower-bound}
 a\geq\frac{r_0}{R}\exp\bigl(-\tfrac12-Cr_0^2\bigr).
\end{equation}
By translation, we may assume $x=0$. Write
$Q_\theta=Q_{0,\theta}$ and $A=A_0$. We claim
\begin{equation}\label{eq:radial-diff}
 \frac{Q_\theta(t)}t\xrightarrow[t\to0]{}a
 \quad\text{in }L^1(S^1).
\end{equation}
Indeed, $F(t)=\int_{B_t(0)}|f-a|\dd z=o(t^2)$, and polar
integration and Tonelli's theorem give
\begin{align*}
 \int_{S^1}\left|\frac{Q_\theta(t)}t-a\right|\dd\theta
 &\leq\frac1t\int_{B_t(0)}\frac{|f(z)-a|}{|z|}\dd z\\
=\frac{1}{t}\int_0^t\frac{F'(v)}{v}\dd v &=\frac{F(t)}{t^2}+\frac1t\int_0^t\frac{F(v)}{v^2}\dd v
 \xrightarrow[t\to0]{}0.
\end{align*}

Choose $0<\eps<\min\{R,r_0/a\}$, and put $s=a\eps<r_0$ and
$L=\log(r_0/s)$. Truncating the shifted logarithm
$\log(Q_\theta/s)$, we define $u_\theta:[\eps,R]\to[0,L]$ by
\[
 u_\theta(t)=\min\bigl\{L,\max\{0,\log(Q_\theta(t)/s)\}\bigr\}.
\]
Along every good ray this function is absolutely continuous, with
$u_\theta(R)=L$ and
\[
 u_\theta'(t)=\frac{f(t\theta)}{Q_\theta(t)}
                 \mathbf 1_{\{s<Q_\theta(t)<r_0\}}
 \quad\text{almost everywhere}.
\]
Using $\log v\leq v-1$ for $v>0$ and \eqref{eq:radial-diff},
we obtain
\begin{equation}\label{eq:cutoff-inner-error}
 e_\eps:=\int_{S^1}u_\theta(\eps)\dd\theta
 \leq\frac1a\int_{S^1}
       \left|\frac{Q_\theta(\eps)}\eps-a\right|\dd\theta
 \xrightarrow[\eps\to0]{}0.
\end{equation}

Set $u(t\theta)=u_\theta(t)$ on $B_R(0)\setminus B_\eps(0)$ and
consider its radial energy
\[
 E_\eps=\int_{B_R(0)\setminus B_\eps(0)}|\partial_r u(z)|^2\dd z
       =\int_{S^1}\int_\eps^R t\,u_\theta'(t)^2\dd t\dd\theta,
\]
where $\partial_r$ denotes the radial derivative. Write
$Q(t\theta)=Q_\theta(t)$ on $B_R(0)$. The identity
\[
 \frac1{Q(z)^2}=\frac1{r_0^2}
       +2\int_{Q(z)}^{r_0}\frac{\dd q}{q^3}
 \qquad(s<Q(z)<r_0),
\]
the definition of the function $A$, and Tonelli's theorem give
\begin{align*}
 E_\eps
 &\leq\int_{\{z\in B_R(0):\,s<Q(z)<r_0\}}
                    \frac{f(z)^2}{Q(z)^2}\dd z\\
 &=\frac{A(r_0)-A(s)}{r_0^2}
       +2\int_s^{r_0}\frac1{q^3}
         \left(\int_{\{z\in B_R(0):\,s<Q(z)<q\}}
                    f(z)^2\dd z\right)\dd q\\
 &=\frac{A(r_0)-A(s)}{r_0^2}
       +2\int_s^{r_0}\frac{A(q)-A(s)}{q^3}\dd q\\
 &=\frac{A(r_0)}{r_0^2}-\frac{A(s)}{s^2}
                    +2\int_s^{r_0}\frac{A(q)}{q^3}\dd q\\
 &\leq\pi+2\pi L+2\pi Cr_0^2,
\end{align*}
where the last inequality uses \eqref{eq:A-upper} and
\eqref{eq:coarse-area-bound}.
On the other hand, integrating the elementary inequality
\[
2u_\theta'(t)\leq t\,u_\theta'(t)^2+1/t\,,
\] 
 and using absolute
continuity along the good rays, we obtain
\begin{align*}
 E_\eps
 &\geq\int_\eps^R\int_{S^1}
       \left(2u_\theta'(t)-\frac1t\right)\dd\theta\dd t\\
 &=2\int_{S^1}u_\theta(R)\dd\theta
       -2\int_{S^1}u_\theta(\eps)\dd\theta
       -2\pi\log\frac R\eps.
\end{align*}
Since $u_\theta(R)=L$ for almost every $\theta$, the first term is
$4\pi L$. The second term is $-2e_\eps$ by
\eqref{eq:cutoff-inner-error}. Combining this lower bound with the
preceding upper bound gives
\[
 \pi+2\pi L+2\pi Cr_0^2\geq E_\eps
 \geq4\pi L-2e_\eps-2\pi\log\frac R\eps.
\]
Rearranging, we obtain
\[
 \frac12+Cr_0^2+\frac{e_\eps}{\pi}
 \geq L-\log\frac R\eps
 =\log\frac{r_0}{a\eps}-\log\frac R\eps
 =\log\frac{r_0}{aR}.
\]
Letting $\eps\to0$ and using \eqref{eq:cutoff-inner-error}, we infer
\[
 \log\frac{r_0}{aR}\leq\frac12+Cr_0^2.
\]
This finishes the proof of  \eqref{eq:uniform-lower-bound} and of the proposition.
\end{proof}

For the subsequent local arguments choose a disc
$D_0=B_\rho(z_0)\Subset\D$. By \cref{prop:positivity}, there is
$c>0$ such that $f\geq c$ almost everywhere on $D_0$. After modifying
$f$ on a null set, this holds everywhere. After restricting $\Phi$ to
$D_0$ and precomposing with a homothety of $\R^2$, we obtain a chart
whose conformal factor is at least $1$ everywhere. We denote the new
domain by $D$ and retain the notation $\Phi,f$ for the chart and its
conformal factor.

From now on we will stick to this chart.
We  put
\[
 W=\log f\geq0\quad\text{on }D.
\]
Since $0\leq\log f\leq f^2$, we have $W\in L^1(D)$.

\section{The curvature inequality}
\label{sec:representative}

We continue with the normalized chart $\Phi:D\to U=\Phi(D)$ and
$W=\log f$ as  at the end of \cref{sec:radial}. The aim of this
section is to prove the distributional curvature inequality.

\begin{proposition}\label{prop:analytic}
For  $W=\log f$, the distribution $\Delta W$ is a signed Radon measure and 
\begin{equation}\label{eq:target}
 \Delta W +\kappa e^{2W} \leq 0\,.
\end{equation}
\end{proposition}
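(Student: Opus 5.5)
The proof will follow the mechanism sketched before \cref{prop:positivity}. Radial averages of $W$ become, after a curvature correction, monotone in the radius. This yields a lower semicontinuous representative of $W$ that is ``superharmonic up to $\kappa e^{2W}$'' in a mean-value sense. That mean-value statement is then converted into the distributional inequality \eqref{eq:target}, and a distribution of the form $\Delta W\le -\kappa e^{2W}$ with $e^{2W}=f^2\in L^1$ automatically makes $\Delta W$ a signed Radon measure.

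\textbf{Step 1 (radial averages).} For a good center $x$ and $0<r<2r_0$, define a quantity $H_x(r)$ built from the radial data $Q_{x,\theta},\tau_{x,\theta}$. For instance, take
\[
H_x(r)=\frac1{2\pi}\int_{S^1}\log\frac{r}{\tau_{x,\theta}(r)}\dd\theta ,
\]
which equals $\log f$ when $f$ is constant, since then $\tau=r/f$. Differentiating in $r$, using $\tau'=1/f(x+\tau\theta)$, gives an expression involving $\int_{S^1}\frac{1}{\tau f(x+\tau\theta)}\dd\theta$. Rewriting $A_x$ via the layer-cake/Tonelli computation of \cref{prop:positivity}, in the $q$-variable, one has $A_x'(r)=\int_{S^1} \tau f(x+\tau\theta)\dd\theta$. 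Cauchy--Schwarz (the square inequality in the text) then gives $\int\frac{1}{\tau f}\ge 4\pi^2/A_x'(r)$. Combined with \eqref{eq:A-upper} and the identity $v_\kappa'(r)=2\pi\,\mathrm{sn}_\kappa(r)$, I aim for the following monotonicity: $r\mapsto H_x(r)+\Psi_\kappa(r)$ is nonincreasing, where $\Psi_\kappa(r)=O(\kappa r^2)$ is explicit. The monotone bound has to be extracted from an integrated inequality, since $A_x'$ is controlled only in aggregate by $A_x\le v_\kappa$. I would therefore integrate the inequality from $r_1$ to $r_2$ and use the integration by parts from the previous proof.

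\textbf{Step 2 (representative).} Set $\widetilde W(x)=\lim_{r\to0}H_x(r)$. This limit exists by monotonicity. Using \eqref{eq:radial-diff} at Lebesgue points, it agrees with $W$ almost everywhere. Express $H_x(r)$ as $\sup$ over $r$ of quantities continuous in $x$, or prove lower semicontinuity directly from the definition of $\tau$ by lower semicontinuity of path lengths. This makes $\widetilde W$ lower semicontinuous and bounded below by $0$.

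\textbf{Step 3 (comparison with smooth lower tests).} Suppose \eqref{eq:target} fails. Then there is a smooth $\varphi$ touching $\widetilde W$ from below at some $x$ (a ``lower test'') with $\Delta\varphi(x)+\kappa e^{2\varphi(x)}>0$. Here I use the viscosity-to-distribution equivalence for lower semicontinuous supersolutions of $\Delta w+\kappa e^{2w}\le 0$. This is standard for semilinear equations, and $e^{2w}$ is monotone in $w$ for $\kappa\ge0$; for $\kappa<0$, I would perturb. The metric $e^{2\varphi}|dz|^2$ near $x$ then has curvature $-e^{-2\varphi}\Delta\varphi<\kappa$ at $x$. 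Since $f\ge e^\varphi$, the $X$-balls around $\Phi(x)$ are contained in the $\varphi$-metric balls. A quantitative version is needed: the defect $f-e^{\varphi}$ enters the radial star area with the favorable sign by the square inequality. This forces $A_x(r)\ge \pi r^2-\frac{\pi\kappa'}{12}r^4+o(r^4)$ with $\kappa'<\kappa$, contradicting \eqref{eq:A-upper} for small $r$. I expect this comparison to be the main obstacle: the touching is only at one point and $W$ may be far above $\varphi$ elsewhere. So one must show that being larger than the test function never decreases the radial-star area to fourth order. My first attempt is to apply the Step 1 monotonicity to $H_x$ with $\varphi$-normalized rays, i.e.\ precompose with a conformal change making $\varphi$ nearly constant to second order.
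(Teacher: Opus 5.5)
There is a genuine gap in Step 3, and it is tied to how you define the representative in Step 2.

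\textbf{Step 3 fails as proposed.} You want to deduce $A_x(r)\geq\pi r^2-\frac{\pi\kappa'}{12}r^4+o(r^4)$ from $f\geq e^{\varphi}$ and the touching at $x$. This cannot work, because the radial-star area is not monotone in the conformal factor. Take $g\equiv1$, and let $f=1$ except $f=2$ on the sector $\{|z|<\delta,\ 0<\arg z<\alpha\}$. On the sector rays $Q_\theta(t)=t+\delta$ for $t\geq\delta$, so $\tau_\theta(r)=r-\delta$ for $r>2\delta$. A direct computation gives $A_f(r)-A_g(r)=\alpha\delta(2\delta-r)<0$: a spike consumes radial length and cuts the star off early.

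Touching only controls $\essliminf_{z\to x}\log f(z)$. Such spikes can therefore be placed at all scales $\delta_k\downarrow0$ without destroying the lower test, and then $A_g(r)-A_f(r)$ can dominate $r^4$. So the statement you identify as needed (being above the test function never decreases the star area to fourth order) is false. The square inequality gives the favorable sign only for $A_x(r)+J_x(r)$, not for $A_x(r)$.

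There is a second obstruction. The bound $A_x(r)\leq v_\kappa(r)$ is only known at good centers, since the star is known to lie in $B^X_r(\Phi(x))$ only when $\int f$ along the rays equals the $X$-length. The touching point, by contrast, is arbitrary, and all curves through a fixed point form a $2$-exceptional family.

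\textbf{The missing idea.} You need to compare via a radial quantity that is monotone in $Q$, after the area bound has already been turned into an inequality valid at every point. The paper sets $W_*(x)=\sup_{0<r<r_0}\{H_x(r)+V_\kappa(r)\}$, where $x\mapsto H_x(r)$ is lower semicontinuous for each $r$ (replace $f$ by $1$ on $B_\eta(x)$ and let $\eta\downarrow0$). Then $H_x(r)+V_\kappa(r)\leq W_*(x)$ holds at every $x$ by definition. Monotonicity is needed only at almost every center, to get $W_*=W$ a.e. Your Step 2, with a limit that ``exists by monotonicity'', does not give this comparison at the touching point.

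At that point, $f\geq e^\varphi$ a.e.\ gives $Q_{f,x,\theta}\geq Q_{g,x,\theta}$ for a.e.\ $\theta$, so $H_{g,x}(r)+V_\kappa(r)\leq\varphi(x)$. After a conformal change with $\nabla\varphi(x)=0$, a Taylor expansion of $H_{g,x}$ yields the sharp inequality. Only the gradient can, and needs to, be removed, not the second-order part.

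\textbf{Your functional could be run this way.} Your $\frac1{2\pi}\int_{S^1}\log(r/\tau_{x,\theta}(r))\dd\theta$ is also monotone in $Q$. One finds $H_{g,x}(r)=\varphi(x)+\frac1{12}e^{-2\varphi(x)}\Delta\varphi(x)r^2+o(r^2)$ and $\Psi_\kappa(r)=\int_0^r(\frac1{\mathrm{sn}_\kappa}-\frac1s)\dd s=\frac{\kappa}{12}r^2+O(r^4)$. However, integrating $H_x'\leq\frac1r-\frac{2\pi}{A_x'}$ against $A_x\leq v_\kappa$ does not give monotonicity. It only gives $H_x(r)-H_x(\eps)\leq\Psi_\kappa(\eps)-\Psi_\kappa(r)+\frac{v_\kappa(\eps)-A_x(\eps)}{2\pi\,\mathrm{sn}_\kappa(\eps)^2}$, so you would additionally need $A_x(\eps)/\eps^2\to\pi$ at a.e.\ center. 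The paper's $H$ avoids this: its derivative involves $J_x(r)$ rather than $A_x'(r)$, and $A_x(r)+J_x(r)\geq2\pi r^2$ holds for each $r$.

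\textbf{Viscosity to distributions.} This step is not off the shelf here, since $W$ may be $+\infty$ and $e^{2W}$ is unbounded. The paper proves it separately. For $\kappa>0$ it approximates $\kappa e^{2u}$ from below by smooth functions. For $\kappa<0$ it truncates to $\min\{u,t\}$ and uses locality of the absolutely continuous parts of the Laplacians.
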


\subsection{From lower tests to distributions}
\label{subsec:lower-test-distribution}

We record a general analytic statement, independently
of the conformal chart. A $C^2$ function $\varphi$ is a \emph{lower test}
for a function  $u$ at a  point $x$ if $\varphi\leq u$ near $x$ and
$\varphi(x)=u(x)$.

\begin{lemma}[Lower tests imply the distributional inequality]
\label{lem:distribution}
Let $\Omega\subset\R^2$ be open and let
$u:\Omega\to\R\cup\{+\infty\}$ be lower semicontinuous and locally
bounded below, with $u,e^{2u}\in L^1_{\loc}(\Omega)$. Suppose that
for every $x\in \Omega$ with $u(x) <\infty$,  every lower test $\varphi$ at  $x$ satisfies
\begin{equation}\label{eq:abstract-lower-test}
 \Delta\varphi(x)+\kappa e^{2\varphi(x)}\leq0.
\end{equation}
Then $\Delta u+\kappa e^{2u}\leq0$ in the sense of distributions.
\end{lemma}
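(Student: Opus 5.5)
The plan is to regularize $u$ by an inf-convolution. This produces semiconcave functions, for which a pointwise inequality almost everywhere upgrades to a distributional one. We then pass to the limit using the monotonicity of the regularization.

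Since the statement is local, fix $\Omega'\Subset\Omega$ on which $u\geq -M$. For $\eps>0$ set
\[
 u_\eps(x)=\inf_{y\in\overline{\Omega'}}\Bigl(u(y)+\frac{|x-y|^2}{2\eps}\Bigr).
\]
Since $u$ is finite almost everywhere, $u_\eps$ is finite. It is semiconcave, since $u_\eps-|x|^2/(2\eps)$ is concave. It satisfies $u_\eps\leq u$ and increases to $u$ pointwise as $\eps\to0$, by lower semicontinuity. By lower semicontinuity and compactness the infimum is attained at some $y=y_\eps(x)$. Writing $\delta_\eps(x)=|x-y_\eps(x)|^2/(2\eps)$, we have $u(y_\eps(x))=u_\eps(x)-\delta_\eps(x)$ and $\delta_\eps\leq u(x)-u_\eps(x)+$ const$\cdot\eps$-type bounds. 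Hence $y_\eps(x)\to x$ and $\delta_\eps(x)\to0$ wherever $u(x)<\infty$. Fix $\Omega''\Subset\Omega'$; for $x\in\Omega''$ and $\eps$ small, the minimizer $y_\eps(x)$ is interior to $\Omega'$.

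The transfer step is as follows. Let $\varphi$ be a lower test for $u_\eps$ at $x\in\Omega''$. Then
\[
 \psi(z)=\varphi(z+x-y)-\frac{|x-y|^2}{2\eps}, \qquad y=y_\eps(x),
\]
is a lower test for $u$ at $y$. Indeed, $u(z)\geq u_\eps(z+x-y)-|x-y|^2/(2\eps)\geq\psi(z)$, with equality at $z=y$. Moreover $u(y)<\infty$. By \eqref{eq:abstract-lower-test},
\[
 \Delta\varphi(x)+\kappa e^{2(u_\eps(x)-\delta_\eps(x))}\leq0 .
\]
By Alexandrov's theorem, $u_\eps$ is twice differentiable at almost every $x$. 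At such $x$, the quadratic Taylor polynomial minus $\eta|z-x|^2$ is a lower test. Letting $\eta\to0$ gives the pointwise inequality almost everywhere
\[
 \Delta u_\eps(x)+\kappa e^{2(u_\eps-\delta_\eps)(x)}\leq0 .
\]
For a semiconcave function, the distributional Laplacian equals the absolutely continuous part, given by the a.e. Hessian trace, plus a nonpositive singular measure. Hence the same inequality holds in the sense of distributions on $\Omega''$, with the lower-order term in $L^\infty_{\loc}$.

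The limit $\eps\to0$ is where I expect the main obstacle, namely justifying the convergence of the nonlinear term. Since $-M\leq u_\eps\uparrow u$ and $u\in L^1_{\loc}$, monotone convergence gives $u_\eps\to u$ in $L^1_{\loc}$, so $\Delta u_\eps\to\Delta u$ as distributions. For the exponential term, note $-M-C\leq u_\eps-\delta_\eps\leq u_\eps\leq u$, where the lower bound holds because $u_\eps-\delta_\eps=u(y_\eps)\geq-M$. Also $u_\eps-\delta_\eps\to u$ almost everywhere. Since $e^{2u}\in L^1_{\loc}$, dominated convergence gives
\[
 e^{2(u_\eps-\delta_\eps)}\to e^{2u}\quad\text{in }L^1_{\loc}.
\]
This works for either sign of $\kappa$. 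Testing against nonnegative $\phi\in C_c^\infty(\Omega'')$ and letting $\eps\to0$ yields $\int u\,\Delta\phi+\kappa\int e^{2u}\phi\leq0$. This is the claim on $\Omega''$, and hence on $\Omega$ by exhaustion. As a by-product, $\Delta u$ is then a signed Radon measure, namely $-\kappa e^{2u}$ minus a positive distribution, i.e. minus a positive measure.
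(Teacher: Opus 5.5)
Your argument is correct, and it takes a genuinely different route from the paper.

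\textbf{The paper's route.} The paper quotes the linear theorem of Harvey--Lawson: if $\Delta v\le q$ holds in the lower-test sense with $q$ smooth, then it holds distributionally. It then treats the nonlinearity separately for each sign of $\kappa$. For $\kappa>0$ it approximates the lower semicontinuous function $\kappa e^{2u}$ from below by smooth functions. For $\kappa<0$ it truncates to $u_t=\min\{u,t\}$, which only gives $\Delta u_t\le|\kappa|e^{2t}$. It then needs the locality of the absolutely continuous part of the Laplacian (Ambrosio--Ponce--Rodiac) to sharpen this to $|\kappa|e^{2u_t}$, before letting $t\to\infty$.

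\textbf{Your route.} Your inf-convolution in effect reproves the linear step, via Alexandrov's theorem and the nonpositive singular part of the Laplacian of a semiconcave function. The transfer to the touching point puts the exact value $u(y_\eps(x))\le u(x)$ into the exponential. So a single dominated-convergence argument with majorant $e^{2u}$ covers both signs of $\kappa$ at once. You thereby avoid the case split and the locality theorem; the paper's proof is shorter given its citations.

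\textbf{Details to tighten.} Three assertions deserve a line of proof.
\begin{enumerate}
\item Uniformity of $y_\eps(x)\in\Omega'$ for all $x\in\Omega''$. If $B_\rho(x)\subset\Omega'$, some $y\in B_\rho(x)$ satisfies $u(y)\le\|u^+\|_{L^1(\Omega')}/(\pi\rho^2)$. Hence $|x-y_\eps(x)|^2\le2\eps(u_\eps(x)+M)\le2\eps\bigl(\|u^+\|_{L^1(\Omega')}/(\pi\rho^2)+M\bigr)+\rho^2$. This is small uniformly in $x\in\Omega''$ once $\rho$ and then $\eps$ are chosen small.
\item Measurability of $u_\eps-\delta_\eps$. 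At every differentiability point of $u_\eps$ the minimizer is unique, namely $y_\eps(x)=x-\eps\nabla u_\eps(x)$. So $\delta_\eps=\frac{\eps}{2}|\nabla u_\eps|^2$ almost everywhere, regardless of how minimizers are selected elsewhere.
\item Convergence of $\delta_\eps$. Your vague bound is better replaced by $\delta_\eps(x)\le u(x)-u(y_\eps(x))$. Its $\limsup$ is $\le0$ wherever $u(x)<\infty$, because $y_\eps(x)\to x$ and $u$ is lower semicontinuous.
\end{enumerate}
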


\begin{proof}
We use the following linear result \cite[Theorems~9.2 and~9.3(A)]{HarveyLawson}:
	if a locally integrable, lower
	semicontinuous function $v$ satisfies $\Delta v\leq q$ in the
	lower-test sense, with a smooth function $q$, then the same inequality holds
	in the distributional sense. 
	
	For $\kappa =0$, the statement of the lemma follows immediately.

For $\kappa>0$, the function $q:=\kappa e^{2u}$ is positive and
lower semicontinuous. Choose continuous $h_j:\Omega\to[0,\infty)$
with $h_j\uparrow q$ \cite[Theorem~2.1.3]{Ransford}.
Approximate the continuous functions  $h_j$ by smooth  $q_j$  with $h_j-2^{-j}\leq q_j\leq h_j$.
Then $q_j\to q$ pointwise and $-1\leq q_j\leq q$.
For every $x\in\Omega$ with $u(x)<\infty$, any lower test $\phi$
for $u$ at $x$ satisfies
\[
 \Delta\phi(x)+q_j(x)\leq0.
\]
Hence $\Delta u+q_j\leq0$  in the sense of distributions.
Dominated convergence implies $\Delta u+q\leq0$ in the
distributional sense.

Assume now that $\kappa<0$. For $t\in\R$,
set $u_t=\min\{u,t\}$. Each $u_t$ is lower semicontinuous and
locally bounded. It suffices to prove for every $t$ the distributional inequality:
\begin{equation} \label{eq:distr}
 \Delta u_t\leq -\kappa e^{2u_t} =|\kappa| e^{2u_t}\,.
\end{equation}
Indeed, as $n\to \infty$, the assumptions
$u,e^{2u}\in L^1_{\loc}(\Omega)$ give $u_n\to u$ and
$e^{2u_n}\to e^{2u}$ in $L^1_{\loc}$ by dominated convergence, which would imply $\Delta u\leq |\kappa|e^{2u}$.

If $\varphi$ is a lower test for $u_t$ at $x$ and $u_t(x)<t$, then
$\varphi$ is also a lower test for $u$ there. If $u_t(x)=t$, then
$\varphi\leq t$ near $x$ with equality at $x$, so
$\Delta\varphi(x)\leq0$. In both cases,
\begin{equation}\label{eq:distr2}
 \Delta\varphi(x)\leq |\kappa| e^{2u_t(x)}\leq |\kappa| e^{2t}.
\end{equation}
The linear implication therefore gives
$\Delta u_t\leq |\kappa|  e^{2t}$ in the distributional sense. Thus $\Delta u_t$
is a signed Radon measure, with Lebesgue decomposition
\[
 \Delta u_t=h_t\LL^2+\mu_t,\qquad
 \mu_t\perp\LL^2,\quad \mu_t\leq0,\quad h_t\leq |\kappa| e^{2t}
 \quad\text{a.e.}
\]

For $s<t$ the functions $u_t$ and $u_s$ both agree with $u$ on $A_s:=\{u <s\}$.  By \cite[Theorem~1.1, Equation~(1.3)]{AmbrosioPonceRodiac}, the densities $h_t$ and $h_s$ of the absolutely continuous parts of the Laplacians $\Delta u_t$  and $\Delta u_s$ agree almost everywhere on the subset $A_s$. Thus, 
\[
 h_t=h_s\leq |\kappa| e^{2s}
 \quad\text{almost everywhere on } A_s.
\]
For fixed $t$, we find a null set $\mathcal N$ such that for any $x\notin \mathcal N$ and any rational number  $s<t$,
\[
u(x)<s  \implies h_t(x) \leq |\kappa| e^{2s}\,.
\] 
Thus, choosing for each $x\in A_t \setminus \mathcal N$ a sequence of rational $s_n\downarrow u(x)$ we deduce
 \[
h_t(x)\leq |\kappa| e^{2u(x)}=|\kappa| e^{2u_t(x)}\,.
\]
On $\{u\geq t\}$, the same bound follows from $h_t\leq |\kappa| e^{2t}$,
since $u_t=t$ there. Together with $\mu_t\leq0$, this proves
\eqref{eq:distr} and finishes the proof of the lemma.
\end{proof}

It therefore remains to find a lower semicontinuous representative
of $W=\log f$ and to verify \eqref{eq:abstract-lower-test} for this representative.

\subsection{Radial averages and semicontinuity}
\label{subsec:direct-radial}

Fix a compact set $K\Subset D$ and choose $R,r_0>0$ as in
\cref{sec:radial}, with all estimates valid on a neighborhood of $K$.
For all $x\in K$, define
\[
 Q_{x,\theta}(t)=\int_0^t f(x+s\theta)\dd s,
 \qquad 0\leq t\leq R,
\]
allowing the value $+\infty$. Since $f\geq1$, we have
$Q_{x,\theta}(t)\geq t$. For $z=x+t\theta$ with $0<t<R$, write
$Q_x(z)=Q_{x,\theta}(t)$. For $0<r<r_0$ put
\begin{equation}\label{eq:H-definition}
\begin{aligned}
 H_x(r)
 &=\frac1{2\pi r^2}\int_{S^1}\int_0^r
         \min\{Q_{x,\theta}(t)^2,r^2\}\frac{\dd t}{t}\dd\theta
          -\frac12\\
 &=\frac1{2\pi r^2}\int_{B_r(x)}
       \frac{\min\{Q_x(z)^2,r^2\}-|z-x|^2}{|z-x|^2}\dd z.
\end{aligned}
\end{equation}
The second integrand is nonnegative, hence $H_x(r)\in[0,\infty]$.
If $f\equiv a\geq1$, then
\begin{equation}\label{eq:H-constant}
 H_x(r)=\frac1{r^2}\int_0^{r/a}a^2t\dd t
           +\int_{r/a}^r\frac{\dd t}{t}-\frac12=\log a.
\end{equation}

The following lemma shows that $H_x(r)$ approximates $W=\log f$
at almost every center.

\begin{lemma}[Semicontinuity and radial approximation]\label{lem:radial-H}
For every fixed $r\in(0,r_0)$, the function $x\mapsto H_x(r)$ is
lower semicontinuous on $K$. At almost every  $x\in K$,
\begin{equation}\label{eq:H-limit}
 H_x(r)\xrightarrow[r\downarrow0]{}W(x).
\end{equation}
\end{lemma}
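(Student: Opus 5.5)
For the lower semicontinuity I will use the first formula in \eqref{eq:H-definition} and Fatou's lemma. The plan is to show that, for fixed $\theta$ and $t$, the map $x\mapsto Q_{x,\theta}(t)=\int_0^t f(x+s\theta)\dd s$ is lower semicontinuous in $x$ with values in $[0,\infty]$. This is not automatic for a merely Borel $f$, so I first replace $f$ by a lower semicontinuous representative. Since $f\geq1$ is Borel and only its class matters for $W$, I would take $f$ to be the supremum of an increasing sequence of continuous functions up to a null set. This is not possible for an arbitrary $L^1$ representative, so instead I would argue with the averaged form. Writing $Q_x(z)=\int_0^1 f(x+\lambda(z-x))|z-x|\dd\lambda$, the expression $\min\{Q_x^2,r^2\}$ is monotone in $f$. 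Approximate $f$ from below by continuous $f_k\uparrow f$ in $L^1$; for a Vitali--Carathéodory type approximation this holds almost everywhere. For each $f_k$ the corresponding $H^{(k)}_x(r)$ is continuous in $x$, by dominated convergence, because the integrand is bounded by $r^2/|z-x|^2$ times $\mathbf 1_{B_r(x)}$ after subtracting. Also $H_x^{(k)}\leq H_x$.

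The key step is then to show $H_x(r)=\sup_k H_x^{(k)}(r)$ for every $x$, not only almost every $x$. For every $x$, the ray integrals of $f_k$ converge to those of $f$ along almost every direction $\theta$. This needs $f_k\uparrow f$ pointwise off a set that meets almost every ray from every $x$ in a null set. I would secure this by choosing $f_k\uparrow \tilde f$ with $\tilde f$ lower semicontinuous, $\tilde f=f$ almost everywhere, and $\tilde f\geq f$, and by declaring $\tilde f$ the representative used in $Q_{x,\theta}$. The paper fixed only a Borel positive representative, so redefining it on a null set is allowed. Monotone convergence then gives the supremum identity, and a supremum of continuous functions is lower semicontinuous. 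I expect this representative issue, that $H_x$ must be defined for every center $x$, to be the main obstacle.

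For \eqref{eq:H-limit}, take $x$ to be a Lebesgue point of $f$ and of $\log f$ at which \eqref{eq:radial-diff} holds, with $a=f(x)$. By translation, $x=0$. Rescaling $t=r\sigma$ gives
\[
H_0(r)=\frac1{2\pi}\int_{S^1}\int_0^1\min\Bigl\{\Bigl(\sigma\,\tfrac{Q_\theta(r\sigma)}{r\sigma}\Bigr)^2,1\Bigr\}\frac{\dd\sigma}{\sigma}\dd\theta-\frac12 .
\]
Set $g_r(\sigma,\theta)=Q_\theta(r\sigma)/(r\sigma)\geq1$. The integrand is $\min\{\sigma^2g^2,1\}/\sigma$, which is $1$-Lipschitz in $g$ up to the factor $\min\{2\sigma,\,\cdot\,\}$. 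More precisely, $|\min\{\sigma^2g^2,1\}-\min\{\sigma^2a^2,1\}|\leq 2\sigma|g-a|$ whenever both $\sigma g,\sigma a\leq1$, and it is bounded by the truncation otherwise. So the difference between $H_0(r)$ and $\log a$, the latter value being \eqref{eq:H-constant}, is at most
\[
\frac1\pi\int_0^1\int_{S^1}|g_r(\sigma,\theta)-a|\dd\theta\dd\sigma .
\]
By \eqref{eq:radial-diff}, the inner integral tends to $0$ for each fixed $\sigma$ as $r\to0$, and it is uniformly bounded for $\sigma\in(0,1]$. The bound holds because the estimate in the proof of \eqref{eq:radial-diff} gives $\sup_{t\leq r_0}\int_{S^1}|Q_\theta(t)/t-a|\dd\theta<\infty$. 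Dominated convergence in $\sigma$ then yields $H_0(r)\to\log a=W(0)$. Since almost every point is a Lebesgue point of $f$, this gives \eqref{eq:H-limit} almost everywhere.
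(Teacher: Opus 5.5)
The lower semicontinuity half has a genuine gap. Your argument rests on a lower semicontinuous $\tilde f$ with $\tilde f=f$ almost everywhere. Equivalently, it rests on continuous $f_k\uparrow f$ almost everywhere, since an increasing limit of continuous functions is lower semicontinuous. Such a representative need not exist. If $g$ is lower semicontinuous and $g\le f$ almost everywhere, then $g(x)\le\essliminf_{z\to x}f(z)$ at every $x$. Take $f=1+\mathbf 1_A$, where $A$ and its complement both meet every disc in positive measure. Then every such $g$ is $\le 1$ everywhere and cannot agree with $f$ on $A$. Vitali--Carath\'eodory only gives lower semicontinuous majorants $v\ge f$ with $\int(v-f)$ small, not a representative. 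Nothing available at this stage ($f\ge1$, $f\in L^2$, the length and area properties) supplies one. In fact, a lower semicontinuous representative of $\log f$ is only obtained later, in \cref{lem:representative}, as a consequence of this very lemma, so assuming it here is circular. Separately, $r^2|z-x|^{-2}\mathbf 1_{B_r(x)}$ is not integrable in the plane, so it is not a valid dominating function. The singularity at the center is exactly the difficulty.

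The missing idea is that the $\theta$-average lets you rely only on continuity of translations in $L^1$, once the center singularity is removed. The paper replaces $f$ by $1$ on $B_\eta(x)$, that is, it sets $Q_x^\eta(x+t\theta)=\eta+\int_\eta^t f(x+s\theta)\dd s$ on $B_r(x)\setminus B_\eta(x)$. There the integrand is bounded by $r^2/\eta^2$. Moreover, $Q^\eta_{x_n}(x_n+\cdot)\to Q^\eta_x(x+\cdot)$ in $L^1(B_r(0)\setminus B_\eta(0))$ whenever $x_n\to x$. Hence $x\mapsto H^\eta_x(r)$ is continuous for the given Borel representative, with no regularity of $f$ along individual rays. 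Because $f\ge1$, $Q^\eta_x\uparrow Q_x$, and the nonnegative integrands (extended by zero on $B_\eta(x)$) increase as $\eta\downarrow0$. Monotone convergence then gives $H_x(r)=\sup_\eta H^\eta_x(r)$ at every $x$, hence lower semicontinuity.

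Your proof of \eqref{eq:H-limit} is correct and is essentially the paper's. The cutoff bound $|\min\{A,1\}^2-\min\{B,1\}^2|\le2|A-B|$ holds without any case distinction. Instead of dominated convergence in $\sigma$, you can simply bound $\frac1\pi\int_0^1\hat e_{r\sigma}(x)\dd\sigma$ by $\frac1\pi\sup_{0<t<r}\hat e_t(x)\to0$.
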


\begin{proof}
For lower semicontinuity, fix $r\in(0,r_0)$. For $0<\eta<r$ and
$\eta<t<r$, set
\[
 Q_x^\eta(x+t\theta)=\eta+\int_\eta^t f(x+s\theta)\dd s,
\]
which corresponds to replacing $f$ by $1$ on $B_{\eta} (x)$. We further  define
\[
 H_x^\eta(r)=\frac1{2\pi r^2}
 \int_{B_r(x)\setminus B_\eta(x)}
 \frac{\min\{Q_x^\eta(z)^2,r^2\}-|z-x|^2}{|z-x|^2}\dd z.
\]
We claim that, for fixed $\eta$, the map $x\mapsto H_x^\eta(r)$ is continuous. This follows from the uniform boundedness of the integrands and the fact that, whenever $x_n\to x$, the functions $z\mapsto Q_{x_n}^{\eta}(x_n+z)$ converge to $z\mapsto Q_x^{\eta}(x+z)$ in $L^1(B_r(0)\setminus B_\eta(0))$.

Since $f\geq1$, we have $Q_x^\eta(x+t\theta)\geq t$ and
$Q_x^\eta(z)\uparrow Q_x(z)$ as $\eta\downarrow0$. The integrands,
extended by zero on $B_\eta(x)$, are nonnegative and increase as
$\eta\downarrow0$. Monotone convergence gives
$H_x^\eta(r)\uparrow H_x(r)$, proving lower semicontinuity
everywhere on $K$.

Now fix a good Lebesgue center $x$ and put $a=f(x)\geq1$.
By \eqref{eq:radial-diff},
\[
\hat  e_t(x):=\int_{S^1}\left|\frac{Q_{x,\theta}(t)}t-a\right|
          \dd\theta\xrightarrow[t\downarrow0]{}0.
\]
Comparing \eqref{eq:H-definition} with \eqref{eq:H-constant} and
using the $2r$-Lipschitz bound for the cutoff, we obtain
\[
\begin{aligned}
 |H_x(r)-\log a|
 &\leq\frac1{2\pi r^2}\int_0^r\int_{S^1}
       \left|\min\{Q_{x,\theta}(t)^2,r^2\}
              -\min\{a^2t^2,r^2\}\right|
          \dd\theta\frac{\dd t}{t}\\
 &\leq\frac1{\pi r}\int_0^r\int_{S^1}
       \left|\frac{Q_{x,\theta}(t)}t-a\right|\dd\theta\dd t\\
 &=\frac1{\pi r}\int_0^r \hat e_t(x)\dd t
   \leq\frac1\pi\sup_{0<t<r} \hat e_t(x)
   \xrightarrow[r\downarrow0]{}0.
\end{aligned}
\]
This proves \eqref{eq:H-limit}.
\end{proof}

To obtain a lower semicontinuous representative of $W$ from the
lower semicontinuity of $x\mapsto H_x(r)$, we show that the convergence
in \eqref{eq:H-limit} is from below after adding the following
$\kappa$-dependent correction:
\begin{equation}\label{eq:radial-correction}
 \Vk(r)=\int_0^r\frac{\pi s^2-\vk(s)}{\pi s^3}\dd s
       =\frac{\kappa r^2}{24}+O(r^4).
\end{equation}
The expansion follows from
$\vk(r)=\pi r^2-\pi\kappa r^4/12+O(r^6)$.

\begin{lemma}\label{lem:radial-comparison}
At almost every center $x\in K$, the function $H_x+\Vk$ is locally
Lipschitz and nonincreasing on $(0,r_0)$. In particular,
\begin{equation}\label{eq:H-comparison}
 H_x(r)+\Vk(r)\leq W(x),\qquad0<r<r_0.
\end{equation}
\end{lemma}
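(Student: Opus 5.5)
The plan is to differentiate $H_x(r)$ in $r$ explicitly and to control the derivative with the square inequality from the end of \cref{sec:radial}, combined with the area bound \eqref{eq:A-upper}. I work at a good center $x$ that is also a point where \eqref{eq:H-limit} holds. Write $Q_\theta=Q_{x,\theta}$ and $\tau_\theta(r)=\tau_{x,\theta}(r)\le r$; the bound $\tau_\theta(r)\le r$ holds because $Q_\theta(t)\ge t$. Set
\[
 G(r)=\int_{S^1}\int_0^r\min\{Q_\theta(t)^2,r^2\}\frac{\dd t}{t}\dd\theta,
 \qquad
 I(r)=\int_{S^1}\int_0^{\tau_\theta(r)}\frac{Q_\theta(t)^2}{t}\dd t\dd\theta,
\]
so that $H_x(r)=G(r)/(2\pi r^2)-\tfrac12$. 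Splitting the inner integral at $\tau_\theta(r)$ gives
\[
 G(r)=I(r)+r^2\int_{S^1}\log\frac{r}{\tau_\theta(r)}\dd\theta .
\]

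\textbf{Step 1: finiteness and local Lipschitz regularity.} Hardy's inequality on each good ray gives $\int_0^{\tau}Q^2t^{-1}\dd t\le4\int_0^\tau f^2t\dd t$. Hence $I(r)\le4A_x(r)\le4\vk(r)<\infty$. The quantity $\int_{S^1}\log(r/\tau_\theta(r))\dd\theta$ is then finite wherever $H_x(r)$ is finite. The latter holds for small $r$, at least at a.e.\ $x$, by \eqref{eq:H-limit}; I would first check that it in fact holds on all of $(0,r_0)$. For $r$ fixed, the map $r\mapsto\min\{Q^2,r^2\}$ is Lipschitz with derivative $2r\mathbf 1_{\{Q>r\}}$. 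Differentiating under the integral, with Tonelli justifying the computation on compact subintervals of $(0,r_0)$, gives
\[
 G'(r)=\int_{S^1}\int_{\tau_\theta(r)}^r\frac{2r}{t}\dd t\dd\theta+\frac{G(r)}{\cdot}\Big|_{\text{upper limit}}
 =2\pi r+2r\int_{S^1}\log\frac r{\tau_\theta(r)}\dd\theta .
\]
The upper-limit term contributes $\min\{Q(r)^2,r^2\}/r=r$ per direction. This formula exhibits $G$ as locally Lipschitz, provided the map $r\mapsto\int\log(r/\tau_\theta(r))\dd\theta$ is locally bounded. That map is monotone-controlled, since $\tau_\theta$ is increasing, and is bounded via the finiteness above.

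\textbf{Step 2: the derivative identity.} Substituting the formula for $G$ into $H_x'=G'/(2\pi r^2)-G/(\pi r^3)$, the logarithmic terms cancel exactly, leaving
\[
 H_x'(r)=\frac1r-\frac{I(r)}{\pi r^3}\qquad\text{for a.e.\ }r .
\]

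\textbf{Step 3: the square inequality.} Integrate $tf^2+Q^2/t\ge2Qf=(Q^2)'$ over $[0,\tau_\theta(r)]$, using $Q_\theta(\tau_\theta(r))=r$, and then over $\theta$. This gives $A_x(r)+I(r)\ge2\pi r^2$. By \eqref{eq:A-upper}, $I(r)\ge2\pi r^2-\vk(r)$. Therefore
\[
 H_x'(r)\le\frac1r-\frac2r+\frac{\vk(r)}{\pi r^3}=-\frac{\pi r^2-\vk(r)}{\pi r^3}=-\Vk'(r).
\]
So $H_x+\Vk$ is locally Lipschitz with a.e.\ nonpositive derivative, hence nonincreasing. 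Letting $r\downarrow0$ and using $\Vk(r)\to0$ together with \eqref{eq:H-limit} yields \eqref{eq:H-comparison}.

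\textbf{Main obstacle.} I expect the delicate part to be Step 1: justifying absolute continuity of $G$ on compact subintervals of $(0,r_0)$ when $\tau_\theta(r)$ may be very small in some directions. This requires showing that $\int_{S^1}\log(r/\tau_\theta(r))\dd\theta$ is locally bounded. I would try to derive this from the finiteness of $H_x$, which itself follows from \eqref{eq:H-limit} for small $r$. The propagation to all of $(0,r_0)$ would use the monotonicity of $\tau_\theta$ in $r$, since increasing $r$ only increases $\tau_\theta$, giving $\log(r/\tau_\theta(r))\le\log(r/\tau_\theta(r'))$ for $r'<r$.
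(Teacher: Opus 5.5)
Your Steps 2 and 3 are the paper's proof. Your $I(r)$ is the paper's $J_x(r)$. The identity $H_x'(r)=1/r-I(r)/(\pi r^3)$, the integrated square inequality $A_x(r)+I(r)\ge 2\pi r^2$ and \eqref{eq:A-upper} give $H_x'\le-\Vk'$, and the final limit via \eqref{eq:H-limit} is also the same.

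\textbf{One step in Step 1 is false.} The claimed per-ray Hardy bound $\int_0^\tau Q^2t^{-1}\dd t\le 4\int_0^\tau f^2t\dd t$ is the borderline case of the weighted Hardy inequality. Writing $Q^2/t=(Q/t)^2t^{a}$ with $a=1$: the inequality $\int (Q/t)^2t^{a}\dd t\le (2/(1-a))^2\int f^2t^{a}\dd t$ holds only for $a<1$, and $4$ is the constant for $a=0$. For example, take $f(t)=1+\tfrac14 t^{-1}(\log\tfrac1t)^{-5/4}$ near $0$. Then $tf^2$ is integrable, but $Q(t)\ge(\log\tfrac1t)^{-1/4}$, so $Q^2/t$ is not.

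You do not need this bound. Both summands in $G(r)=I(r)+r^2\int_{S^1}\log(r/\tau_\theta(r))\dd\theta$ are nonnegative. Hence $G(r)=2\pi r^2(H_x(r)+\tfrac12)<\infty$ already gives finiteness of $I(r)$ and of the logarithmic term. This finiteness holds at small radii by \eqref{eq:H-limit}.

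Your propagation via monotonicity of $\tau_\theta$ then works. For $r_1\le r\le r_2$,
\[
\int_{S^1}\log\frac{r}{\tau_\theta(r)}\dd\theta\le 2\pi\log\frac{r_2}{r_1}+\int_{S^1}\log\frac{r_1}{\tau_\theta(r_1)}\dd\theta ,
\]
and
\[
G(r_2)-G(r_1)\le 2\pi r_2^2\log\frac{r_2}{r_1}+(r_2^2-r_1^2)\int_{S^1}\log\frac{r_1}{\tau_\theta(r_1)}\dd\theta .
\]
These give finiteness and local Lipschitz continuity of $G$ on all of $(0,r_0)$. They also give the integrable dominating function $2r_2\mathbf 1_{\{Q_\theta>r_1\}}/t$ for the difference quotients, which justifies your a.e.\ derivative formula.

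\textbf{Comparison with the paper.} With this repair, your regularity argument is a valid alternative. The paper instead extracts $\int_{S^1}Q_{x,\theta}(t)\dd\theta\le C_x t$ from \eqref{eq:radial-diff} and uses $\min\{Q^2/r^2,1\}\le Q/r$. That yields explicit bounds $H_x(r)+\tfrac12\le C_x/(2\pi)$, $J_x(r)\le C_x r^2$ and $|\partial_r\min\{Q^2/r^2,1\}|\le 2Q/r^2$, without invoking \cref{lem:radial-H} for finiteness. Your route uses only finiteness of $H_x$ at one small radius, plus the monotonicity of $\tau_\theta$.
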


\begin{proof}
Fix a good Lebesgue center $x$. By \eqref{eq:radial-diff} and
integrability away from $x$, there is $C_x>0$ such that
\[
 \int_{S^1}Q_{x,\theta}(t)\dd\theta\leq C_x t,
 \qquad0<t<r_0.
\]
This estimate justifies differentiation under the
integral in \eqref{eq:H-definition} and shows that $H_x$, and hence
$H_x+\Vk$, is locally Lipschitz.

Differentiating the first formula in
\eqref{eq:H-definition}, we obtain, for almost every $r\in(0,r_0)$,
\begin{align}
 H_x'(r)
 &=\frac{\dd}{\dd r}\left[
   \frac1{2\pi}\int_{S^1}\int_0^r
   \min\left\{\frac{Q_{x,\theta}(t)^2}{r^2},1\right\}
       \frac{\dd t}{t}\dd\theta\right]\notag\\
 &=\frac1r+\frac1{2\pi}\int_{S^1}\int_0^r
     \frac{\partial}{\partial r}
     \min\left\{\frac{Q_{x,\theta}(t)^2}{r^2},1\right\}
       \frac{\dd t}{t}\dd\theta\notag\\
 &=\frac1r-\frac1{\pi r^3}\int_{S^1}\int_0^r
     Q_{x,\theta}(t)^2\mathbf1_{\{Q_{x,\theta}(t)<r\}}
       \frac{\dd t}{t}\dd\theta\notag\\
 &=\frac1r-\frac{J_x(r)}{\pi r^3}.
 \label{eq:H-derivative}
\end{align}
Here $J_x(r)$ denotes the double integral in the third line.
The boundary term is $1/r$ because $Q_{x,\theta}(r)\geq r$, and hence $\min\left\{\frac{Q_{x,\theta}(r)^2}{r^2},1\right\}=1$.
The interior derivative is $-2Q_{x,\theta}(t)^2/r^3$ where
$Q_{x,\theta}(t)<r$, and zero where $Q_{x,\theta}(t)>r$.

We now use the same square inequality as in the proof of
\cref{prop:positivity}:
\[
 t f(x+t\theta)^2+\frac{Q_{x,\theta}(t)^2}{t}
       \geq2Q_{x,\theta}(t)f(x+t\theta).
\]
Integrate over $\{0<t<r:Q_{x,\theta}(t)<r\}$ and over $S^1$.
The first term is $A_x(r)$ and the second term is $J_x(r)$. Along each good ray, absolute continuity,
$Q_{x,\theta}'=f(x+t\theta)$, and $Q_{x,\theta}(r)\geq r$ give
\[
 2\int_{\substack{0<t<r\\Q_{x,\theta}(t)<r}}
             Q_{x,\theta}(t)f(x+t\theta)\dd t=r^2.
\]
Thus $A_x(r)+J_x(r)\geq2\pi r^2$. Combining this with
\eqref{eq:H-derivative} and \eqref{eq:A-upper}, we obtain
\[
 H_x'(r)\leq   \frac 1 r  - \frac{2\pi r^2- A_x(r)}{\pi r^3} =
  \frac{ A_x(r)-\pi r^2}{\pi r^3} 
          \leq\frac{\vk(r)-\pi r^2}{\pi r^3}=-\Vk'(r)\,,
          \]
          for almost every $r$.
Local Lipschitz continuity and the last inequality imply that
$H_x+\Vk$ is nonincreasing. Together with \eqref{eq:H-limit} and $\Vk(r)\to0$ as $r\downarrow0$,
this proves \eqref{eq:H-comparison}.
\end{proof}

We now choose the representative needed for pointwise testing.

\begin{lemma}[A representative for pointwise tests]\label{lem:representative}
The function
\begin{equation}\label{eq:W-definition}
 W_*(x)=\sup_{0<r<r_0}\{H_x(r)+\Vk(r)\}
\end{equation}
is nonnegative, lower semicontinuous, and equal to $W$ almost
everywhere. It satisfies
\begin{equation}\label{eq:essential-representative}
 W_*(x)=\essliminf_{z\to x}\log f(z).
\end{equation}
\end{lemma}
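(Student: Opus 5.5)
Nonnegativity is immediate. $H_x(r)\geq0$ by \eqref{eq:H-definition}, and $\Vk(r)\to0$ as $r\downarrow0$. So the supremum in \eqref{eq:W-definition} is at least $\limsup_{r\downarrow0}(H_x(r)+\Vk(r))\geq0$, and it is finite or $+\infty$.

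Lower semicontinuity comes next. $\Vk$ does not depend on $x$, and by \cref{lem:radial-H} each $x\mapsto H_x(r)$ is lower semicontinuous on $K$. A supremum of lower semicontinuous functions is lower semicontinuous, so $W_*$ is lower semicontinuous on $K$. Exhausting $D$ by compact sets $K$ makes the definition consistent: one takes $r_0$ locally, and changing $r_0$ does not affect the behaviour used below. For equality almost everywhere, take $x$ in the full-measure set where both \cref{lem:radial-H} and \cref{lem:radial-comparison} hold. By \cref{lem:radial-comparison} the function $H_x+\Vk$ is nonincreasing, so the supremum equals the limit as $r\downarrow0$, and this limit is $W(x)$ by \eqref{eq:H-limit}. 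Hence $W_*=W$ almost everywhere.

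It remains to prove \eqref{eq:essential-representative}. Since $W_*=W$ almost everywhere and $W_*$ is lower semicontinuous, at every $x$ we get
\[
W_*(x)\leq\essliminf_{z\to x}W_*(z)=\essliminf_{z\to x}W(z).
\]
The reverse inequality is the main step. It suffices to show $H_x(r)\leq\operatorname*{ess\,sup}_{B_r(x)}W$; I would in fact aim for the stronger bound
\[
H_x(r)\geq\operatorname*{ess\,inf}_{B_r(x)}W-\Vk(r)\cdot(\text{something harmless}),
\]
giving $W_*(x)\geq\essliminf_{z\to x}W(z)$ directly. Here is the argument. Fix $r$ and set $m=\operatorname*{ess\,inf}_{B_r(x)}f\geq1$. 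Then $f\geq m$ almost everywhere on $B_r(x)$, so along almost every ray $Q_{x,\theta}(t)\geq mt$ for $t<r$; this uses Fubini in polar coordinates, since a null set in the plane meets almost every ray in a null set. The integrand in \eqref{eq:H-definition} is monotone in $Q$, so $H_x(r)\geq\log m$ by the computation \eqref{eq:H-constant} with $a=m$.

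Therefore $W_*(x)\geq\sup_r\bigl(\operatorname*{ess\,inf}_{B_r(x)}W+\Vk(r)\bigr)$. Letting $r\downarrow0$ with $\Vk(r)\to0$ gives $W_*(x)\geq\essliminf_{z\to x}W(z)$. If $\kappa<0$ then $\Vk$ is negative, but it still tends to $0$, so the limit argument is unaffected.

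The step needing the most care is this polar/Fubini justification at an arbitrary center $x$, not just a good one. The bound $Q_{x,\theta}\geq mt$ must hold for almost every $\theta$ even though $x$ was not chosen generically. This follows because $Q$ is defined as an integral of a Borel function allowing $+\infty$, and the polar image of a planar null set is null for almost every $\theta$.
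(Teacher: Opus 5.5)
Your proof is correct and follows the paper's argument essentially step by step. You get lower semicontinuity as a supremum of the lower semicontinuous maps $x\mapsto H_x(r)+\Vk(r)$, and equality almost everywhere from monotonicity together with \eqref{eq:H-limit}. You get $W_*\leq\essliminf$ from lower semicontinuity plus equality almost everywhere, and the reverse inequality by polar integration at the fixed center, which gives $Q_{x,\theta}(t)\geq mt$ and hence $H_x(r)\geq\log m$ via \eqref{eq:H-constant}.

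The only blemish is the stray sentence claiming it suffices to show $H_x(r)\leq\operatorname*{ess\,sup}_{B_r(x)}W$. That points in the wrong direction and should be deleted. The bound you actually prove, $H_x(r)\geq\operatorname*{ess\,inf}_{B_r(x)}W$ with no correction term, is exactly what is needed.
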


\begin{proof}
The first assertions follow from
\cref{lem:radial-H,lem:radial-comparison},
$H_x(r)\geq0$, and $\Vk(r)\to0$. Lower semicontinuity and equality
almost everywhere give
$W_*(x)\leq\essliminf_{z\to x}\log f(z)$.
Conversely, suppose $f\geq a\geq1$ almost everywhere near $x$.
Polar integration about this fixed center gives
$Q_{x,\theta}(t)\geq at$ for almost every direction and all small
$t$. Comparison with the constant factor $a$ in
\eqref{eq:H-definition} yields $H_x(r)\geq\log a$ for small $r$.
Letting $r\downarrow0$ gives $W_*(x)\geq\log a$. This proves \eqref{eq:essential-representative}.
\end{proof}

\subsection{Lower tests and curvature}

From now on, we use this representative of $W$. It is nonnegative,
belongs to $L^1(D)$, and may be $+\infty$ on a null set.
At every center and every admissible radius, its definition gives
\begin{equation}\label{eq:allcenter-comparison}
 H_x(r)+\Vk(r)\leq W(x).
\end{equation}
This comparison is needed at every point $x$, not only at good centers.

\begin{lemma}[The sharp lower-test inequality]\label{lem:viscosity}
	If $\varphi$ is  a lower test for $W$ at a point $x$ where
$W(x)<\infty$, then
\begin{equation}\label{eq:viscosity}
 \Delta\varphi(x)+\kappa e^{2\varphi(x)}\leq0.
\end{equation}
\end{lemma}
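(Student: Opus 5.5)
The plan is to compare $W$ with the smooth function $\varphi$ through the radial averages $H_x$. Combining \eqref{eq:allcenter-comparison} with a second-order expansion of the radial average of the smooth conformal factor $e^{\varphi}$ should force the sign condition \eqref{eq:viscosity}.

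\textbf{Step 1 (monotonicity).} Let $\varphi$ be a lower test for $W$ at $x$, with $b=\varphi(x)=W(x)<\infty$. Since $W$ agrees almost everywhere with $\log f$, we get $f\geq e^{\varphi}$ almost everywhere on a neighborhood of $x$. Polar integration about the fixed center $x$ then gives, for almost every $\theta$ and all small $t$,
\[
 Q_{x,\theta}(t)\geq Q^\varphi_\theta(t):=\int_0^t e^{\varphi(x+s\theta)}\dd s ,
\]
exactly as in the proof of \cref{lem:representative}. The first formula in \eqref{eq:H-definition} is monotone in $Q$. So, writing $H^\varphi(r)$ for the same expression built from $Q^\varphi$, we obtain $H^\varphi(r)\leq H_x(r)$. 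With \eqref{eq:allcenter-comparison} this gives
\[
 H^\varphi(r)+\Vk(r)\leq b=\varphi(x)\qquad\text{for all small } r>0 .
\]

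\textbf{Step 2 (removing the gradient).} The Laplacian is sensitive to first-order terms only through the coordinate rays, which are not geodesics. To avoid tracking the gradient, I first normalize $\nabla\varphi(x)=0$ using the conformal freedom of charts. Precompose with $h(z)=x+(z-x)+c(z-x)^2$ for a suitable complex $c$. Then $\Phi\circ h$ is a chart near $x$ with factor $(f\circ h)|h'|$, and by \eqref{eq:essential-representative} its representative is $W\circ h+\log|h'|$; note that \eqref{eq:essential-representative} is intrinsic, so it transforms correctly. The function $\varphi\circ h+\log|h'|$ is a lower test for it at $x$. Because $\log|h'|$ is harmonic and $|h'(x)|=1$, the quantity $\Delta\varphi(x)+\kappa e^{2\varphi(x)}$ is unchanged, while the gradient becomes $\nabla\varphi(x)+2(\operatorname{Re}c,-\operatorname{Im}c)$, which we can make zero. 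Since all the results of \cref{sec:representative} apply to any normalized chart (after shrinking the domain and rescaling so that the factor is at least $1$), we may assume $\nabla\varphi(x)=0$.

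\textbf{Step 3 (expansion, the main computation).} Set $q(\theta)=\langle D^2\varphi(x)\theta,\theta\rangle$, whose angular mean is $\frac{1}{2\pi}\int_{S^1}q\dd\theta=\frac12\Delta\varphi(x)$. With $\nabla\varphi(x)=0$ we have
\[
 Q^\varphi_\theta(t)=e^{b}t\bigl(1+\tfrac16q(\theta)t^2+O(t^3)\bigr),
\]
uniformly in $\theta$. Insert this into \eqref{eq:H-definition}. It is convenient to compare with \eqref{eq:H-constant} for the constant $a=e^b$, using the derivative formula \eqref{eq:H-derivative} with $J$ computed explicitly. I expect to obtain
\[
 H^\varphi(r)=b+\frac{e^{-2b}\Delta\varphi(x)}{24}r^2+o(r^2).
\]
This matches the heuristic that $e^{2\varphi}|dz|^2$ has Gaussian curvature $K=-e^{-2b}\Delta\varphi(x)$ at $x$ and $H\approx b-Kr^2/24$. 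The cutoff boundary $\{Q^\varphi_\theta=r\}$ sits at $t=re^{-b}(1+O(r^2))$, and its contribution must be tracked carefully to second order. This bookkeeping, with uniform error control, is the step I expect to be the main obstacle. A useful check is to run the computation on the hyperbolic and spherical model factors, where $H^\varphi$ must reduce to $b-\kappa r^2/24+o(r^2)$, since those are exactly the cases in which the comparison with $\Vk$ is sharp.

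\textbf{Step 4 (conclusion).} Combining Steps 1 and 3 with $\Vk(r)=\kappa r^2/24+O(r^4)$ from \eqref{eq:radial-correction} gives
\[
 \frac{r^2}{24}\bigl(e^{-2b}\Delta\varphi(x)+\kappa\bigr)+o(r^2)\leq0 .
\]
Dividing by $r^2$ and letting $r\downarrow0$ yields $\Delta\varphi(x)+\kappa e^{2\varphi(x)}\leq0$, which is \eqref{eq:viscosity}.
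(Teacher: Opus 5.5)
Your proposal is correct and follows the paper's proof essentially step for step. You normalize $\nabla\varphi(x)=0$ by a conformal change of coordinates, combine monotonicity of $H$ in $Q$ with \eqref{eq:allcenter-comparison} to get $H^\varphi(r)+V_\kappa(r)\le\varphi(x)$, and expand $H^\varphi(r)=\varphi(x)+\tfrac1{24}e^{-2\varphi(x)}\Delta\varphi(x)r^2+o(r^2)$. The only difference is cosmetic: the paper uses a primitive of $\exp(h+i\widehat h)$ with $h$ harmonic affine and $h\le0$, which keeps the factor $\ge1$ without your extra rescaling.

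The Step~3 bookkeeping you flag is handled in the paper by noting that $\min\{Q_\theta^2,r^2\}-\min\{a^2t^2,r^2\}$ equals $(Q_\theta^2-a^2t^2)\mathbf 1_{\{t<r/a\}}$ outside a window $|t-r/a|\le C_0r^3$, and that this window contributes only $O(r^4)$. Also, since the test $\varphi$ is only $C^2$, your remainder should be $o(t^2)$ rather than $O(t^3)$; this is harmless.
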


\begin{proof}
The idea is the following. A violation of \eqref{eq:viscosity} means
that the $C^2$ Riemannian metric $e^{2\varphi}|\dd z|^2$ has curvature
strictly less than $\kappa$ at $x$. Consequently, sufficiently small
balls centered at $x$ in this metric have area greater than $\vk(r)$.
This suggests seeking the opposite comparison for the radial averages
associated with $e^\varphi$ by a Taylor expansion. Such a comparison would
contradict the lower-test condition and \eqref{eq:allcenter-comparison}.
This argument works once $\nabla\varphi(x)=0$. We therefore begin with a
conformal change of coordinates.

We first record the effect of a conformal change of coordinates.
Let $F:V\to\widetilde V$, with $V\subset D$, be a conformal
diffeomorphism between planar domains, and put $h=\log|F'|$.
In the coordinates $w=F(z)$, the parametrization becomes
$\widetilde\Phi=\Phi\circ F^{-1}$, with conformal factor and logarithm
\begin{equation}\label{eq:coordinate-factor}
 \widetilde f\circ F=e^{-h}f,\qquad
 \widetilde W\circ F=W-h.
\end{equation}
The second identity holds pointwise for the representatives defined
by \eqref{eq:essential-representative}, since $F$ preserves null sets
in both directions and $h$ is continuous. The length formula, the
one-sided area formula, and local $L^2$ integrability are invariant
under this change; the target metric and $\kappa$ remain unchanged.
If $\varphi$ touches $W$ from below at $x\in V$, then
\[
 \widetilde\varphi=(\varphi-h)\circ F^{-1}
\]
touches $\widetilde W$ from below at $F(x)$. Since $h$ is harmonic,
the transformation rule for the planar Laplacian gives
\[
 \bigl(\Delta_w\widetilde\varphi+\kappa e^{2\widetilde\varphi}\bigr)(F(x))
 =|F'(x)|^{-2}
  \bigl(\Delta_z\varphi+\kappa e^{2\varphi}\bigr)(x).
\]
Thus \eqref{eq:viscosity} is invariant under conformal coordinate changes.

We now choose conformal coordinates in which the lower test has
zero gradient, while retaining $\widetilde f\geq1$ and arranging
$\widetilde\varphi>0$. Choose a harmonic affine function $h$ with
$\nabla h(x)=\nabla\varphi(x)$. After subtracting a constant from $h$ and
restricting the neighborhood, we may also assume $h\leq0$ and
$\varphi-h>0$ there. Let $\widehat h$ be a harmonic conjugate of $h$.
A holomorphic primitive $F$ with $F'=\exp(h+i\widehat h)$ is an
injective coordinate change after restriction. The transformed
factor satisfies $\widetilde f\geq1$, and the transformed lower test
satisfies $\widetilde\varphi>0$ and $\nabla_w\widetilde\varphi(F(x))=0$.

Applying \cref{subsec:direct-radial} to the transformed chart, we
obtain the comparison \eqref{eq:allcenter-comparison} in these coordinates as well.

We may therefore assume $x=0$, $\nabla\varphi(0)=0$, and $\varphi>0$
near zero. Put $a=e^{\varphi(0)}>1$ and $g=e^\varphi$.
We now define $Q_{g,0,\theta}$ and $H_{g,0}$ by replacing $f$ with $g$
in the radial constructions, and estimate them using the Taylor
expansion of $\varphi$.

Uniformly in $\theta\in S^1$,
\[
 Q_{g,0,\theta}(t):=\int_0^t g(s\theta)\dd s
   =a\left(t+\frac{\varphi_{\theta\theta}(0)}6t^3+o(t^3)\right).
\]
Write $Q_\theta=Q_{g,0,\theta}$. Squaring gives, again uniformly in
$\theta$,
\[
 Q_\theta(t)^2-a^2t^2
 =\frac{a^2}{3}\varphi_{\theta\theta}(0)t^4+o(t^4).
\]
Using \eqref{eq:H-definition} for the $C^2$ factor $g$, we write
\[
 H_{g,0}(r)+\frac12
 =\frac1{2\pi r^2}\int_{S^1}\int_0^r
       \min\{Q_\theta(t)^2,r^2\}\frac{\dd t}{t}\dd\theta.
\]

Since $Q_\theta(t)=at+O(t^3)$, the transition $Q_\theta(t)=r$
occurs at $t=r/a+O(r^3)$, uniformly in $\theta$. Thus
\[
 \min\{Q_\theta(t)^2,r^2\}-\min\{a^2t^2,r^2\}
\]
agrees with $(Q_\theta(t)^2-a^2t^2)\mathbf1_{\{t<r/a\}}$
outside an interval $|t-r/a|\leq C_0r^3$, for some $C_0>0$
independent of $\theta$. Their difference is $O(r^4)$ on that
interval. Since $t$ is comparable to $r$ there, integrating the error
against $r^{-2}\dd t/t$ contributes $O(r^4)$, uniformly in $\theta$.
Since $H_{a,0}(r)=\log a$ by \eqref{eq:H-constant}, we obtain
\begin{align}
 H_{g,0}(r)-\log a
 &=\frac1{2\pi r^2}\int_{S^1}\int_0^{r/a}
       \bigl(Q_\theta(t)^2-a^2t^2\bigr)\frac{\dd t}{t}\dd\theta
       +o(r^2)\notag\\
 &=\frac{a^2}{6\pi r^2}\int_{S^1}\varphi_{\theta\theta}(0)
                   \int_0^{r/a}t^3\dd t\dd\theta+o(r^2)\notag\\
 &=\frac{e^{-2\varphi(0)}\Delta\varphi(0)}{24}r^2+o(r^2).
 \label{eq:smooth-H-expansion}
\end{align}
Since $f\geq g$ almost everywhere near zero, polar integration at
this fixed center gives $Q_{f,0,\theta}\geq Q_{g,0,\theta}$ for
almost every direction and all sufficiently small $t$. The definition of $H$ is monotone in $Q$,
so \eqref{eq:allcenter-comparison} yields, for all small $r$,
\[
 H_{g,0}(r)+\Vk(r)\leq H_{f,0}(r)+\Vk(r)
                   \leq W(0)=\varphi(0).
\]
Insert \eqref{eq:radial-correction} and
\eqref{eq:smooth-H-expansion}, divide by $r^2/24$, and let
$r\downarrow0$. This proves \eqref{eq:viscosity}.
\end{proof}

\begin{proof}[Proof of \cref{prop:analytic}]
The representative in \cref{lem:representative} satisfies the
hypotheses of \cref{lem:distribution}, by \cref{lem:viscosity} and
the integrability established at the end of \cref{sec:radial}.
Thus \eqref{eq:target} holds for this representative and hence for
$W=\log f$. 
\end{proof}

\section{Identification of the conformal metric}
\label{sec:identification}
Let $\Phi:D\to U$ be a chart supplied by \cref{prop:charts}, and
let $W$ be the representative of $\log f$ constructed in
\cref{lem:representative}. Thus $W$ is lower semicontinuous and locally
bounded below, $\mu:= -\Delta W$ is a signed Radon measure, and
\begin{equation}\label{eq:curvature-measure}
	\mu=-\Delta W\geq\kappa e^{2W}\dd z.
\end{equation}
We first investigate the canonical conformal metric associated with $W$ away
from its possible cusp points. We will freely use the theory of
surfaces of bounded integral curvature, as presented in \cite{Reshetnyak}.

We call a point $z\in D$ a \emph{cusp point} if
$\mu(\{z\})\geq2\pi$. We record the following observation.

\begin{lemma}\label{lem:cusps}
The set $S$ of cusp points is discrete in $D$. If $\kappa\geq0$,
then $S$ is empty.
\end{lemma}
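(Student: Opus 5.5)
Discreteness is immediate from Radon-ness: $\mu$ is a signed Radon measure on $D$, so its total variation $|\mu|$ is finite on every compact $K\Subset D$. Each cusp point in $K$ carries $|\mu|$-mass at least $2\pi$, so $K$ contains at most $|\mu|(K)/(2\pi)$ of them. Hence $S\cap K$ is finite for every compact $K$, and $S$ is discrete in $D$.

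For $\kappa\geq0$ the plan is to show that $\mu(\{z\})\geq2\pi$ forces $e^{2W}\notin L^1$ near $z$. This is impossible, because $f^2=e^{2W}\in L^1(D)$ by \cref{prop:charts}. Suppose $\mu(\{z_0\})=\beta\geq2\pi$ and put $z_0=0$.

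\textbf{Step 1: reduce to a lower bound near $0$.} For $\kappa\geq0$, \eqref{eq:curvature-measure} gives $\mu\geq0$. Choose a small disc $B_\delta(0)$ and split
\[
W=-\frac{1}{2\pi}\int_{B_\delta(0)}\log|z-w|\,d\mu(w)+h,
\]
where $h$ is harmonic on $B_\delta(0)$ and hence bounded near $0$. Writing $\mu|_{B_\delta(0)}=\beta\delta_0+\nu$ with $\nu\geq0$, we get
\[
W(z)=-\frac{\beta}{2\pi}\log|z|+p(z)+h(z),\qquad p(z)=-\frac{1}{2\pi}\int\log|z-w|\,d\nu(w).
\]

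\textbf{Step 2: control $p$.} For $\delta<1$ every $w$ in the support satisfies $|z-w|<1$ when $z$ is near $0$, so $p\geq0$. Since $\beta\geq2\pi$, this yields
\[
e^{2W}\geq c\,|z|^{-\beta/\pi}\geq c\,|z|^{-2}
\]
near $0$, and $|z|^{-2}$ is not integrable at the origin. This contradicts $e^{2W}\in L^1(D)$. Equality $W=\log f$ holds almost everywhere, which is all the integral needs.

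\textbf{Main obstacle.} The delicate point is justifying the decomposition, i.e.\ that $W$ agrees almost everywhere with the logarithmic potential of $\mu$ plus a harmonic function. This follows from Weyl's lemma applied to $W+\frac{1}{2\pi}\int\log|z-w|\,d\mu(w)$, which is a distribution with zero Laplacian and is therefore harmonic.

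For $\kappa<0$ the same argument breaks down: $\nu$ may be negative, down to $\kappa e^{2W}$, and then $p$ need not be bounded below. This is consistent with the cusp phenomenon, so that case requires no further claim here.
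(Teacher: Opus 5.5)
Your proof is correct and follows the paper's argument: discreteness comes from local finiteness of $|\mu|$, and for $\kappa\geq0$ a decomposition of the superharmonic $W$ near a cusp gives $e^{2W}\gtrsim|z-z_0|^{-2}$ almost everywhere, contradicting $f\in L^2$. The only difference is that you derive this decomposition by hand, via Weyl's lemma and the nonnegativity of the remaining potential on a small disc, whereas the paper cites the Riesz decomposition theorem directly.
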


\begin{proof}
Since $|\mu|$ is finite on compact subsets and each cusp point has
mass at least $2\pi$, every compact subset of $D$ contains only
finitely many cusp points.

Suppose now that $\kappa\geq0$ and $b=\mu(\{z\})\geq2\pi$.
Then $\Delta W\leq0$, so $W$ is superharmonic. By the Riesz
decomposition theorem \cite[Theorem~3.7.9]{Ransford}, we obtain
\[
 W(x)\geq\frac b{2\pi}\log\frac1{|x-z|}-C
      \geq\log\frac1{|x-z|}-C
\]
for some $C\in\R$ and almost every $x$ in a
neighborhood of $z$. Hence
$e^{2W(x)}\geq e^{-2C}|x-z|^{-2}$ almost everywhere near $z$,
contradicting the local integrability of $e^{2W}$.
\end{proof}

For the remainder of this section, assume that $D$ contains no cusp
points. Since $W$ is lower semicontinuous, the integral
$\int_\gamma e^W\dd s\in[0,\infty]$ is well-defined for every
Euclidean rectifiable curve $\gamma$ in $D$. The associated
conformal distance is given by
\begin{equation}\label{eq:conformal-distance}
 \tilde d(x,y)=\inf_\gamma\int_\gamma e^W\dd s,
\end{equation}
where the infimum is taken over all Euclidean rectifiable curves in
$D$ joining $x$ to $y$, and $\dd s$ denotes Euclidean arclength.
By Reshetnyak's theory and the absence of cusp points, $\tilde d$
is a finite length metric inducing the Euclidean topology on $D$;
see \cite[Theorem~7.1.1]{Reshetnyak} and \cite[Section~6.2]{LS}.
We denote the resulting metric space by $Y=(D,\tilde d)$. Its
Hausdorff area measure is $e^{2W}\dd z$, and its so-called curvature measure
is $\mu$; see \cite[Chapter~7]{Reshetnyak}.

Reshetnyak's theory and the stability of lower curvature bounds under
Gromov--Hausdorff convergence yield the following result, as explained
in the proofs of \cite[Corollary~5.2 and Lemma~6.2]{LS}.
See also \cite[Section~3, proof of Theorem~1]{Marot} for an independent proof of this statement.

\begin{lemma}[Lower curvature bound]\label{lem:canonical}
The space $Y$ is locally $\CBB(\kappa)$.
\end{lemma}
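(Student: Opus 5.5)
The plan is to approximate $W$ by smooth conformal factors whose Riemannian metrics have curvature at least $\kappa-\delta$. The lower bound then passes to $Y$ through Gromov--Hausdorff convergence and Reshetnyak's convergence theorem for metrics with bounded integral curvature.

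Since the statement is local, fix a point $p\in D$ and a small disc $B=B_\rho(p)\Subset D$. Because there are no cusp points, $|\mu|(\{p\})<2\pi$, so after shrinking $\rho$ we have $\mu^+(B)<2\pi$, where $\mu^+$ is the positive part of $\mu$. By the Riesz decomposition on $B$,
\[
W=\frac1{2\pi}\int_B\log\frac1{|z-\zeta|}\dd\mu(\zeta)+h,
\]
with $h$ harmonic on $B$.

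Mollify: set $W_\eps=W*\rho_\eps$ on a slightly smaller disc $B'$. Then $W_\eps$ is smooth, $-\Delta W_\eps=\mu*\rho_\eps$, and \eqref{eq:curvature-measure} gives
\[
-\Delta W_\eps\geq\kappa\,(e^{2W}*\rho_\eps).
\]
To compare this with $\kappa e^{2W_\eps}$, use Jensen's inequality: $e^{2W_\eps}\leq e^{2W}*\rho_\eps$.
\begin{itemize}
\item For $\kappa\geq0$ this gives $-\Delta W_\eps\geq\kappa e^{2W_\eps}$ directly. So the smooth metric $e^{2W_\eps}|\dd z|^2$ has Gaussian curvature $-e^{-2W_\eps}\Delta W_\eps\geq\kappa$.
\item For $\kappa<0$ the inequality points the wrong way. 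Here I would first replace $W$ by $\min\{W,t\}$ or use a supersolution adjustment as in the proof of \cref{lem:distribution}. Alternatively, I would use that $\kappa e^{2W}$ is then a bounded-below signed measure whose mollification converges in $L^1$. This yields curvature $\geq\kappa-\delta_\eps$ on compact sets where $W$ is bounded, with $\delta_\eps\to0$.
\end{itemize}

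Next I would invoke Reshetnyak's convergence theorem \cite[Chapter~7]{Reshetnyak}. The curvature measures $\mu*\rho_\eps$ converge weakly to $\mu$, their positive parts stay uniformly below $2\pi$ on small discs, and $h$ is fixed. Under these conditions the metrics $d_\eps$ induced by $e^{W_\eps}$ converge to $\tilde d$ uniformly on compact subsets of $B'$. Each $(B',d_\eps)$ is a smooth surface with curvature $\geq\kappa-\delta_\eps$. By Toponogov's theorem, small balls around $p$ with a fixed radius $r$ (chosen uniformly, using the uniform convergence and the fact that $\tilde d$ induces the Euclidean topology) satisfy the $(\kappa-\delta_\eps)$-comparison for triangles contained in them, provided geodesics stay inside $B'$. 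Passing to the Gromov--Hausdorff limit, the triangle comparison is stable, so a neighbourhood of $p$ in $Y$ is $\CBB(\kappa)$ \cite{AKP}.

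The main obstacle is the case $\kappa<0$ near points where $W=+\infty$ or $W$ is unbounded, which are non-cusp atoms of mass in $(0,2\pi)$. There the mollified curvature bound degenerates. I would handle it in two steps. First, verify the comparison on the complement of the discrete set of points with positive $\mu$-atoms, which have mass below $2\pi$ and are therefore cone points of positive curvature (angle $2\pi-\mu(\{z\})>0$). Second, extend the comparison across these points, which is harmless because a positive-curvature cone point does not obstruct lower curvature bounds and geodesics avoid such points in their interiors. This is the route via \cite[Corollary~5.2, Lemma~6.2]{LS}.
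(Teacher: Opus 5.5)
\textbf{Verdict: genuine gap in the case $\kappa<0$.} Your overall route (mollify, apply Reshetnyak's convergence theorem, pass the lower curvature bound through the uniform limit) is the one the paper invokes by reference to \cite{LS}. For $\kappa\ge0$ your Jensen argument is correct.

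\textbf{Why the $\kappa<0$ case fails.} The mollified metric only satisfies $K_\eps\ge\kappa\,(e^{2W}*\rho_\eps)/e^{2W_\eps}$. So everything depends on the Jensen ratio tending to $1$ uniformly. That ratio is governed by the mean oscillation of $W$ at scale $\eps$, which $L^1$ convergence of $\kappa e^{2W}*\rho_\eps$ does not control.
\begin{enumerate}
\item The ratio genuinely fails to tend to $1$ near every positive atom. Take a hyperbolic cone point, $\mu=b\,\delta_q+\kappa e^{2W}\,dz$ near $q$. On the annulus $\eps<|x-q|<2\eps$ the smeared atom is absent, so $-\Delta W_\eps=\kappa\,e^{2W}*\rho_\eps$ there. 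By scaling, the ratio tends to a constant $c>1$ depending only on $b$ and $(x-q)/\eps$. Hence $K_\eps\approx c\kappa<\kappa$ for every $\eps$. Truncating to $\min\{W,t\}$ changes the metric and does not remove this effect.
\item Your remedy (remove the atoms, then reinsert them as cone points) assumes the set of atoms is discrete. It is only countable. For instance, $W=\beta\sum_k 2^{-k}\log(2/|z-q_k|)$ on $\D$, with $\{q_k\}$ dense and $0<\beta<1$, satisfies all hypotheses for $\kappa\le0$ and has no cusps. Yet $W=+\infty$ on a dense set. So no open set is atom-free or carries a bound on $W$, and your two-step plan has nothing to start from.
\end{enumerate}

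\textbf{What is missing: a quantitative Jensen-defect bound plus thresholding.}
\begin{enumerate}
\item Near $x$, split $W$ into two parts. The first is the potential of $\mu|_{B_r(x)}$, whose BMO norm is $\lesssim|\mu|(B_r(x))$. The second is a function harmonic on $B_r(x)$, which oscillates by $O(\eps/r)$ on $B_\eps(x)$. John--Nirenberg then bounds the ratio by $1+\omega$, with $\omega\to0$ as $|\mu|(B_r(x))+\eps/r\to0$.
\item Since $\limsup_{r\to0}\sup_{x\in K}|\mu|(B_r(x))=\max_{x\in K}\mu(\{x\})$, fix $\eta>0$ and let $F_\eta$ be the finite set of atoms of mass $\ge\eta$. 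Away from $F_\eta$, the approximants have curvature $\ge\kappa(1+\omega_\eta)-o(1)$, with $\omega_\eta\to0$ as $\eta\to0$. Hence $Y$ is locally $\CBB(\kappa(1+\omega_\eta))$ off $F_\eta$.
\item The finitely many points of $F_\eta$ are cone points of angle $<2\pi$, so they are not interior points of shortest paths. In a fixed convex neighbourhood $P$, the set $P\setminus F_\eta$ is therefore geodesic. Petrunin's globalization, as at the end of the proof of \cref{thm:main}, makes $P$ a $\CBB(\kappa(1+\omega_\eta))$ space. Now let $\eta\to0$.
\end{enumerate}
Alternatively, approximate by solutions of $-\Delta W_n+|\kappa|e^{2W_n}=\nu*\rho_n$ with $\nu=\mu-\kappa e^{2W}\,dz\ge0$. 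These have curvature $\ge\kappa$ exactly, at the cost of needing a stability theorem for this equation. Without one of these ingredients the case $\kappa<0$ does not go through.
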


For \emph{every} Euclidean rectifiable curve $\gamma$ in $D$, its
length in $Y$ equals $\int_\gamma e^W\dd s$, by
\cite[Theorem~5]{ReshetnyakLength}. For \emph{$2$-almost every} such
curve, this integral equals $\length_X(\Phi\circ\gamma)$ by
\cref{prop:charts}\textup{(ii)}. Together with lower
semicontinuity of length in $X$, these imply the following result,
as in \cite[proof of Lemma~9.3]{LWcharacterization}; see also the
forward-map argument in \cite[proof of Lemma~6.3]{LS}.

\begin{lemma}\label{lem:short}
The homeomorphism $\Phi:Y\to U\subset X$ is locally $1$-Lipschitz.
\end{lemma}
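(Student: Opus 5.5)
The plan is to prove that for every $x,y$ in a small convex Euclidean disc $B\Subset D$ one has $d_X(\Phi(x),\Phi(y))\le\tilde d(x,y)$. Since $\tilde d$ induces the Euclidean topology, this gives local $1$-Lipschitz continuity. By the definition \eqref{eq:conformal-distance}, it suffices to show, for every $\eps>0$, the following: there is a Euclidean rectifiable curve $\gamma$ from $x$ to $y$ in $D$ with $\int_\gamma e^W\dd s\le\tilde d(x,y)+\eps$ that can be approximated by curves $\gamma_n$ from $x_n\to x$ to $y_n\to y$ with
\[
 \length_X(\Phi\circ\gamma_n)\le\int_\gamma e^W\dd s+\eps .
\]
Continuity of $\Phi$ then gives $d_X(\Phi(x),\Phi(y))\le\tilde d(x,y)+2\eps$.

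The construction of $\gamma_n$ is the usual perturbation argument. First, using that $\tilde d$ is a length metric and that $Y$ is locally $\CBB(\kappa)$ (\cref{lem:canonical}), hence locally geodesic, I take $\gamma$ to be a $Y$-geodesic, or an almost minimizing curve, between nearby points. I then replace it by a polygonal curve with $\int e^W$ only slightly larger. This is possible because the length of $\gamma$ in $Y$ equals $\int_\gamma e^W\dd s$ for every rectifiable curve (\cite[Theorem~5]{ReshetnyakLength}), so short $Y$-segments can be compared locally. Next, let $h\in L^2$ be a Borel function witnessing the $2$-exceptional family from \cref{prop:charts}(ii), and consider the parallel translates $\gamma+v$ for $|v|<\delta$. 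Fubini gives $\int_{|v|<\delta}\int_{\gamma+v}h\dd s\dd v<\infty$, so for almost every $v$ the translate $\gamma+v$ is non-exceptional, and along it
\[
 \length_X(\Phi\circ(\gamma+v))=\int_{\gamma+v}f\dd s=\int_{\gamma+v}e^W\dd s,
\]
where the last equality uses $W=\log f$ almost everywhere together with Fubini. It then remains to choose $v_n\to0$ with $\int_{\gamma+v_n}e^W\dd s\to\int_\gamma e^W\dd s$, or at least with the $\limsup$ bounded by it.

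The main obstacle is this last step. Lower semicontinuity of $W$ only gives $\liminf\ge$, whereas here I need an upper bound on averages over translates. My first attempt is the averaged quantity
\[
 \frac1{\pi\delta^2}\int_{|v|<\delta}\int_{\gamma+v}e^W\dd s\dd v=\int_\gamma (e^W*\chi_\delta)\dd s ,
\]
where $\chi_\delta$ is the normalized indicator of the $\delta$-disc. Rather than bounding this for an arbitrary curve, I exploit the flexibility in choosing $\gamma$: the bound $\mu\ge\kappa e^{2W}$ means $W$ is, up to a continuous correction, superharmonic. Indeed, $W+\frac{|\kappa|}{4}\,G*e^{2W}$-type corrections, or locally $W-\kappa_-\psi$ with $\Delta\psi=e^{2W}$, are superharmonic, and the potential of a measure without atoms of mass $\ge2\pi$ gives $e^W\in L^p_{\loc}$ for some $p>1$. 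For superharmonic functions, disc averages are bounded by point values, $W*\chi_\delta\le W$ pointwise, and Jensen's inequality combined with the correction yields $e^W*\chi_\delta\le(1+o(1))e^{W}$ along $\gamma$ modulo a term controlled by the correction. If this pointwise comparison fails, I fall back on Reshetnyak's approximation theorem. That theorem gives smooth metrics $e^{2W_n}|\dd z|^2$, with $W_n$ the $\delta_n$-mollifications, converging uniformly in distance to $\tilde d$, with $W_n\le W$ up to $o(1)$ because of superharmonicity. For them, $\int_\gamma e^{W_n}$ equals the translate average, so some translate has $X$-length at most $\tilde d(x,y)+o(1)$. This concludes the argument as in \cite[proof of Lemma~9.3]{LWcharacterization}.
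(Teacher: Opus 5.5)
Your reduction is sound, and so is the Fubini step: almost every translate $\gamma+v$ is non-exceptional and satisfies $\length_X(\Phi\circ(\gamma+v))=\int_{\gamma+v}e^W\dd s$. The gap is exactly at the step you flag, and neither of your two attempts closes it.

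\textbf{The pointwise bound.} For the convex function $\exp$, Jensen's inequality gives $e^{W*\chi_\delta}\le e^W*\chi_\delta$. Combined with $W*\chi_\delta\le W$, this yields no upper bound on $e^W*\chi_\delta$ at all. In fact the pointwise claim $e^W*\chi_\delta\le(1+o(1))e^W$ is false. Take $W=\alpha\log(1/|z|)$ with $0<\alpha<1$; it is superharmonic with a single atom of mass $2\pi\alpha<2\pi$. For $|z|=\delta$, scaling gives $(e^W*\chi_\delta)(z)=c_\alpha e^{W(z)}$ with $c_\alpha=\frac1\pi\int_{B_1((1,0))}|w|^{-\alpha}\dd w$. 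This constant does not depend on $\delta$, and $c_\alpha>1$ because $|w|^{-\alpha}$ is strictly subharmonic on that disc.

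\textbf{The fallback.} It has the same sign error. The translate average is $\int_\gamma(e^W*\chi_{\delta_n})\dd s$, not $\int_\gamma e^{W_n}\dd s$, and by Jensen the former dominates the latter. So $d_n\to\tilde d$ bounds the $X$-lengths of translates only from below, which is the useless direction.

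The polygonal replacement is also unjustified, since $W$ is merely lower semicontinuous and may be $+\infty$ on a dense set; but you do not need it. Hence the bound $\limsup_n\int_{\gamma+v_n}e^W\dd s\le\int_\gamma e^W\dd s$, which is the real content of the lemma, remains unproved. An integrated version along a fixed curve might be provable by potential-theoretic estimates, but nothing in your sketch supplies it.

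\textbf{How the paper proceeds, and one way to close the gap.} The paper's proof combines three facts: Reshetnyak's identity $\length_Y(\gamma)=\int_\gamma e^W\dd s$ for \emph{every} rectifiable curve, the length identity for $2$-almost every curve, and lower semicontinuity of length in $X$. It then follows the Lytchak--Wenger and Lytchak--Stadler arguments. What you need is a supply of non-exceptional competitors whose $Y$-length is controlled from above. One concrete way to get it is as follows.
\begin{enumerate}
\item Since $e^W\geq1$, $Y$-rectifiable curves are Euclidean rectifiable with $\dd s_Y=e^W\dd s$.
\item Let $h\in L^2(D)$ witness the exceptional family, enlarged by the curves spending positive length where $f\neq e^W$. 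Then $he^{-W}\in L^2(Y,\HH^2_Y)$ witnesses the same family in $Y$.
\item In the locally $\CBB(\kappa)$ space $Y$ (\cref{lem:canonical}), the segment inequality coming from Bishop--Gromov applies. It shows that for $\HH^2_Y\otimes\HH^2_Y$-almost every pair $(p',q')$ near $(p,q)$, a $Y$-geodesic $\gamma_{p'q'}$ is non-exceptional.
\item For such pairs,
\[ d_X(\Phi(p'),\Phi(q'))\le\length_X(\Phi\circ\gamma_{p'q'})=\length_Y(\gamma_{p'q'})=\tilde d(p',q'). \]
\end{enumerate}
Density of such pairs and continuity of $\Phi$ and $\tilde d$ then give the lemma.
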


In particular, $\Phi$ does not increase the Hausdorff area of any
Borel subset $E\subset Y$. Combining this with the area formula
\eqref{eq:E} and the identity
$\HH_Y^2(E)=\int_E e^{2W}\dd z=\int_E f^2\dd z$ gives the following
result.
\begin{lemma}\label{lem:area}
The map $\Phi:(Y,\HH_Y^2)\to(U,\HH_U^2)$ is area-preserving.
\end{lemma}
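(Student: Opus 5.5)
The claim amounts to $\HH_U^2(\Phi(E))=\HH_Y^2(E)$ for every Borel set $E\subset Y$, i.e.\ $\Phi_*\HH_Y^2=\HH_U^2$ on $U$. I will obtain this as two opposite inequalities and then compare them.

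\emph{Upper bound.} By \cref{lem:short}, $\Phi$ is locally $1$-Lipschitz. A locally $1$-Lipschitz map does not increase $\HH^2$ on sets that are small enough: cover by small sets on which $\Phi$ is $1$-Lipschitz and pass to the limit in the Hausdorff premeasures. Any Borel set is a countable disjoint union of such small Borel pieces, so
\[
 \HH_U^2(\Phi(E))\leq\HH_Y^2(E).
\]

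\emph{Lower bound.} Here I use the area formula \eqref{eq:E}, which gives $\int_E f^2\dd z\leq\HH_X^2(\Phi(E))$. The comparison with $Y$ requires the identity $\HH_Y^2(E)=\int_E e^{2W}\dd z$ from Reshetnyak's theory. Since $W=\log f$ almost everywhere, this equals $\int_E f^2\dd z$. Also $\HH_X^2$ and $\HH_U^2$ agree on subsets of $U$. Hence
\[
 \HH_Y^2(E)\leq\HH_U^2(\Phi(E)),
\]
and the two inequalities together give equality.

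\emph{Main obstacle.} The only delicate point is measure-theoretic: one needs $\Phi(E)$ to be Borel, and the area identity must apply on all of $E$, not just on the Lipschitz pieces $A$. The first is immediate, since $\Phi$ is a homeomorphism onto the open set $U$, so images of Borel sets are Borel. For the second, \eqref{eq:E} is already stated for arbitrary Borel $E$. The null set $E\setminus A$ contributes nothing to $\int_E f^2$, and the inequality direction handles the rest; the upper bound then forces $\HH_U^2(\Phi(E\setminus A))=0$. No further input is needed.
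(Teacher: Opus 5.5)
Your proposal is correct and follows the paper's argument: the upper bound $\HH_U^2(\Phi(E))\le\HH_Y^2(E)$ comes from the local $1$-Lipschitz property in \cref{lem:short}, and the lower bound comes from \eqref{eq:E} together with Reshetnyak's identity $\HH_Y^2(E)=\int_E e^{2W}\dd z=\int_E f^2\dd z$. Your extra remark about $E\setminus A$ is harmless but unnecessary, since \eqref{eq:E} already holds for every Borel set $E$.
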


To show that $\Phi$ does not contract distances locally, we invoke
once more the upper bounds for the areas of balls in $X$.

\begin{proposition}\label{prop:area-rigidity}
The map $\Phi$ is a local isometry. Hence $U$ is locally
$\CBB(\kappa)$.
\end{proposition}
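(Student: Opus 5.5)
We must show $\Phi:Y\to U$ is locally distance-nonincreasing in the inverse direction, i.e.\ that $\Phi^{-1}$ is locally $1$-Lipschitz. Combined with \cref{lem:short} this makes $\Phi$ a local isometry, and then \cref{lem:canonical} transfers the lower curvature bound to $U$. The plan is an area-rigidity argument: a strict contraction of distances by $\Phi$ would force some ball in $X$ to contain the $\Phi$-image of a strictly larger region of $Y$. Since $\Phi$ preserves area (\cref{lem:area}), this would push the area of that $X$-ball above $v_\kappa(r)$, contradicting \eqref{eq:main-ball}.

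Concretely, fix $p\in D$ and work in a small neighborhood where $Y$ is $\CBB(\kappa)$, geodesic, and where \eqref{eq:main-ball} holds for $\Phi(p)$ and all small $r$. By \cref{lem:short}, $\Phi(B^Y_r(p))\subset B^X_r(\Phi(p))$. Suppose the two sets differ. Then there is $y$ with $d_X(\Phi(p),\Phi(y))<r$ but $\tilde d(p,y)\geq r$. Openness of $B^X_r(\Phi(p))$ and continuity give a small $Y$-ball $B^Y_\delta(y')$, disjoint from $B^Y_r(p)$, whose image still lies in $B^X_r(\Phi(p))$; shrink $r$ slightly if needed to make $\tilde d(p,y')>r$. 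Using injectivity and \cref{lem:area}, we get
\[
 \HH^2_X(B^X_r(\Phi(p)))\geq \HH^2_Y(B^Y_r(p))+\HH^2_Y(B^Y_\delta(y'))>\HH^2_Y(B^Y_r(p)).
\]

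The main obstacle is that Bishop--Gromov in $Y$ gives only the \emph{upper} bound $\HH^2_Y(B^Y_r(p))\leq v_\kappa(r)$. That inequality points the wrong way, so the strict inequality above is no contradiction by itself. I would instead argue at the level of the density $\theta(r)=\HH^2_Y(B_r^Y(p))/v_\kappa(r)$, which is nonincreasing in $r$ and tends to $1$ as $r\to0$ at regular points. At a regular point, so at $\HH^2$-almost every $p$, it stays close to $1$ for small $r$. Cusps are excluded, and in the bounded integral curvature theory points with $\mu(\{p\})>0$ form a countable set; to avoid them, I would handle almost every $p$ first and then use density. The point is not an infinitesimal comparison but the global-in-$r$ inclusion. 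If $d_X(\Phi(p),\Phi(q))<\tilde d(p,q)=:\ell$ for some pair of points, then for every $r$ slightly above $d_X(\Phi(p),\Phi(q))$ the $X$-ball contains $\Phi$ of the $Y$-ball of radius $r$ together with a neighborhood of $\Phi(q)$, i.e.\ a uniform extra amount of area. I would try to show quantitatively that this extra area is of order comparable to $r^2$. The argument is that $\Phi$ is $1$-Lipschitz, so the whole image of a $Y$-geodesic from $q$ toward $p$ of length roughly $\ell-d_X$, thickened, lies in the $X$-ball. Via the $\CBB(\kappa)$ volume comparison from below for thin tubes or small balls around $q$, this excess beats the deficit $v_\kappa(r)-\HH^2_Y(B^Y_r(p))=o(r^2)$ at regular $p$, giving the contradiction.

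Having proved $\tilde d(p,q)\leq d_X(\Phi(p),\Phi(q))$ for nearby $q$ and almost every $p$, continuity of both metrics extends it to all nearby pairs. Together with \cref{lem:short}, this shows $\Phi$ is a local isometry, and $U$ is locally $\CBB(\kappa)$ by \cref{lem:canonical}. The step I expect to be delicate is making the excess area of order $r^2$ rather than merely positive, which requires choosing $q$ with $\ell-d_X(\Phi(p),\Phi(q))$ comparable to $\ell$. I would first attempt this by a scaling argument: pick $q$ at comparable distance and use that contraction ratios are scale-invariant under blow-up at regular points.
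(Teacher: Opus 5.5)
You have the right mechanism (two disjoint $Y$-balls forced into one $X$-ball), but there is a genuine gap exactly at the step you flag as delicate, and the globalization you propose does not work.

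First, a smaller point: the radius should not be slightly above $d=d_X(\Phi(p),\Phi(q))$. Take $R=(\ell+d)/2$ and $\rho=(\ell-d)/2$. Then $B^Y_R(p)$ and $B^Y_\rho(q)$ are disjoint and, since $\Phi$ is $1$-Lipschitz, both map into $B^X_R(\Phi(p))$. The excess is of order $R^2$ exactly when $\ell/d\geq\lambda>1$.

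With a fixed center $p$ satisfying $\mu(\{p\})=0$, Reshetnyak's density formula gives $\HH^2_Y(B^Y_R(p))\geq(\pi-o(1))R^2$. Bishop--Gromov monotonicity plus compactness gives $\HH^2_Y(B^Y_\rho(q))\geq c\rho^2$. Together these rule out a sequence $q_j\to p$ with $\tilde d(p,q_j)\geq\lambda\, d_X(\Phi(p),\Phi(q_j))$.

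That is all the fixed-center argument yields: the pointwise bound $\limsup_{q\to p}\tilde d(p,q)/d_X(\Phi(p),\Phi(q))\leq1$, valid only below a scale that depends on $p$ in an uncontrolled way. It does not give $\tilde d(p,q)\leq d_X(\Phi(p),\Phi(q))$ for all $q$ in a neighborhood of $p$, since at any fixed scale a contraction ratio slightly above $1$ is not excluded. ``Almost every $p$ plus continuity'' cannot upgrade it either. Without uniform scales, density of good centers says nothing about a given pair, and any curve joining two points is a null set. Your scaling remark does not repair this. To get contracted pairs at smaller scales from one contracted pair, you must subdivide along an $X$-geodesic, and then both endpoints move away from your fixed regular center.

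The missing ingredient is a lower density bound that is uniform in the center. The paper proves the following. Suppose $x_j\to x$, $\delta_j\downarrow0$ and $\HH^2_Y(B^Y_{\delta_j}(x_j))\leq(\pi-\eps)\delta_j^2$. Bishop--Gromov at $x_j$, followed by $j\to\infty$, gives $\HH^2_Y(B^Y_r(x))\leq(1-\eps/\pi)\vk(r)$ for small $r$. Reshetnyak's density formula then forces $\mu(\{x\})\geq2\eps$. Hence near any $x$ outside the finite set $F_\eps$ of such atoms, every small ball has area greater than $(\pi-\eps)\delta^2$.

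With this, the two-ball contradiction runs with both $p_j,q_j\to x$. It shows that $\Phi^{-1}$ is locally $\lambda$-Lipschitz on $\Phi(Z\setminus F_\eps)$. Finiteness of $F_\eps$ is then removed by arguing along curves, and one lets $\lambda\downarrow1$.

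Alternatively, your pointwise bound holds at every point outside the countable atom set. You could integrate it along an injective $X$-geodesic using a Dini-type lemma that tolerates countably many exceptional parameters. Either way, such an argument has to be supplied; as written, the proposal does not reach the two-point inequality.
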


\begin{proof}
The statement is local. Fix a point of $Y$ and choose a relatively
compact geodesic open neighborhood $Z\subset Y$ whose closure,
with the restricted metric, is a compact Alexandrov space; see
\cite[Proposition~1.1 and Lemma~3.8]{LebedevaNepechiy}.
We may assume in addition that  the restriction of $\Phi$ to $Z$  is $1$-Lipschitz.

We claim that for every $\eps>0$ there is a finite set
$F_\eps\subset Z$ with the following property. If $x_j\to x\in Z$
and $\delta_j\downarrow0$ satisfy
\[
 \HH_Y^2(B_{\delta_j}^Y(x_j))
       \leq(\pi-\eps)\delta_j^2,
\]
then $x\in F_\eps$.

Indeed, the Bishop--Gromov inequality
\cite[Corollary~10.13]{BGP}, applied at $x_j$ and followed by
$j\to\infty$, gives
\[
 \HH_Y^2(B_r^Y(x))
 \leq\left(1-\frac{\eps}{\pi}\right)\vk(r)
\]
for all sufficiently small $r>0$. By
\cite[Lemma~8.1.1]{Reshetnyak} and the coarea formula,
\[
 \lim_{r\downarrow0}\frac{\HH_Y^2(B_r^Y(x))}{r^2}
 =\pi-\frac12\mu(\{x\}).
\]
Thus $\mu(\{x\})\geq2\eps$, so we may take
$F_\eps=\{q\in Z:\mu(\{q\})\geq2\eps\}$.
This set is finite since $|\mu|(\overline Z)<\infty$.

Fix $\lambda>1$, set $\eta=(\lambda-1)/(\lambda+1)$, and choose
$\eps>0$ so small that
\[
 (\pi-\eps)(1+\eta^2)>\pi.
\]
We show that $\Phi^{-1}$ is locally $\lambda$-Lipschitz on
$\Phi(Z\setminus F_\eps)$. Otherwise, there are
$p_j,q_j\to x\in Z\setminus F_\eps$ such that
\[
 L_j:=d_Y(p_j,q_j)>\lambda d_j,
 \qquad d_j:=d_X(\Phi(p_j),\Phi(q_j)).
\]
Put
\[
 R_j=\frac{L_j+d_j}{2},\qquad
 \rho_j=\frac{L_j-d_j}{2}>\eta R_j.
\]
The balls $B_{R_j}^Y(p_j)$ and $B_{\rho_j}^Y(q_j)$ are disjoint
and, for large $j$, contained in $Z$. Since
$d_j+\rho_j=R_j$, their images both lie in
$B_{R_j}^X(\Phi(p_j))$. By the claim, their areas are greater
than $(\pi-\eps)R_j^2$ and $(\pi-\eps)\rho_j^2$, respectively,
for all large $j$. Area preservation and \eqref{eq:main-ball}
therefore imply
\[
 (\pi-\eps)(1+\eta^2)R_j^2
 <\HH_X^2(B_{R_j}^X(\Phi(p_j)))
 \leq\vk(R_j)=\pi R_j^2+O(R_j^4).
\]
Dividing by $R_j^2$ and letting $j\to\infty$ gives a
contradiction.

Since $F_\eps$ is finite, we deduce  for 
every rectifiable curve $\beta$ in $\Phi(Z)$:
\[
 \length_Y(\Phi^{-1}\circ\beta)\leq\lambda\length_X(\beta).
\]
Letting $\lambda\downarrow1$ and using the local shortness of $\Phi$,
we conclude that $\Phi|_Z$ preserves all curve lengths.
Since $X$ and $Y$ are locally length spaces, $\Phi:Y\to X$ is a local isometry.
\end{proof}

\section{Proof of the main theorem}
\label{sec:cusps}
\label{sec:conclusion}

\begin{proof}[Proof of \cref{thm:main}]
Suppose first that $X$ has curvature locally bounded below by
$\kappa$. Alexandrov spaces satisfy property~(ET); see
\cite[Section~11]{LWArea}. This property is local, so it also holds
for $X$. The Bishop--Gromov inequality gives the ball-area bound
\eqref{eq:main-ball}; see
\cite[Theorem~10.2]{BGP} and \cite[Lemma~4.4]{LebedevaNepechiy}.

Conversely, suppose that $X$ satisfies property~(ET) and
\eqref{eq:main-ball}. Fix $x\in X$ and choose a chart
$\Phi:D\to U$ about $x$ as in
\cref{prop:charts}. \Cref{prop:positivity} gives the positive local
lower bound for $f$, and \cref{lem:representative,prop:analytic} give
the representative $W$ satisfying \eqref{eq:curvature-measure}.
Let $S\subset D$ be its set of cusp points. By \cref{lem:cusps},
$S$ is locally finite and is empty if $\kappa\geq0$. Applying
\cref{prop:area-rigidity} on coordinate discs compactly contained
in $D\setminus S$, we see that $X$ is locally $\CBB(\kappa)$ on
$U\setminus\Phi(S)$, and that its metric agrees locally there
with the canonical conformal metric.

Thus it remains to consider the case $z_*:=\Phi^{-1}(x)\in S$.
Then $\kappa<0$. After restricting $\Phi$ to a smaller 
disc about $z_*$ and replacing $U$ by its image, we may assume
that $S=\{z_*\}$.

Define $\tilde d$ on all of $D$ by \eqref{eq:conformal-distance}.
The local identification away from $z_*$ gives
$\length_X(\Phi\circ\gamma)=\int_\gamma e^W\dd s$ for every
simple Euclidean rectifiable arc $\gamma\subset D$: if the arc
meets $z_*$, split it there and pass to the limit on its subarcs.
Conversely, the lift of any simple rectifiable arc in $U$ is
Euclidean rectifiable, since $e^W$ is locally bounded below.
Taking infima over these arcs identifies $\tilde d$ with the
intrinsic metric of $U$. Thus $\tilde d$ is finite and, by
\cite[Theorem~7.1.1]{Reshetnyak}, is a metric of bounded integral
curvature inducing the coordinate topology. In particular, $U$
is locally a surface of bounded integral curvature also at $x$.

By \cite[Lemma~8.1.1, p.~124]{Reshetnyak}, for all sufficiently
small $r>0$ the metric circle
$C_r=\{y\in U:d_X(x,y)=r\}$ is a simple closed curve and
\[
 0\leq\lim_{r\downarrow0}\frac{\length_X(C_r)}{r}
 =2\pi-\mu(\{z_*\})\leq0.
\]
The point $x$ cannot be a \emph{transit point}, no unit-speed minimizing geodesic $\gamma:[-a,a]\to U$ with
$\gamma(0)=x$ exists. Otherwise, such a geodesic would satisfy, for small $r>0$,
\[
 2r=d_X(\gamma(-r),\gamma(r))
 \leq\tfrac12\length_X(C_r)=o(r),
\]
by joining its two intersections with $C_r$ along the shorter
arc of that circle. This is a contradiction.

By \cite[Proposition~5.3]{CreutzRomney}, there is a compact
absolutely convex polygon $P\subset U$ containing $x$, whose
boundary consists of transit points of $X$. Therefore $x\in P^\circ$.
Equip $P$ and $P\setminus\{x\}$ with the restricted metric of $X$.
Convexity supplies minimizing geodesics in $P$ between any two of
its points. If neither endpoint is $x$, such a geodesic cannot
pass through $x$, by the preceding argument. Hence
$P\setminus\{x\}$ is a geodesic space. It is locally
$\CBB(\kappa)$, including at its boundary: the local four-point
comparison inequalities are inherited from $U\setminus\{x\}$.
By Petrunin's globalization theorem \cite{PetruninGlobal}, its
completion, which is $P$, is $\CBB(\kappa)$. Since $x\in P^\circ$,
$X$ is locally $\CBB(\kappa)$ at $x$ as well. This completes the
proof.
\end{proof}

\appendix
\section{Passing to the intrinsic metric}\label{app:intrinsic}

Write $d_{\mathrm{length}}$ for the extended length metric induced by
$d$, allowing the value $+\infty$.

\begin{proposition} \label{prop:app}
Let $(X,d)$ be a connected metric surface satisfying property~\textup{(ET)}
and \eqref{eq:main-ball}.
Then there exists a discrete, possibly empty, subset $S\subset X$ with the
following property. No point of $S$ can be joined by a curve of finite
length to any other point of $X$, and $X\setminus S$ is connected and
locally $\CBB(\kappa)$ with respect to the induced length metric
$d_{\mathrm{length}}$.

If $\kappa\geq 0$ then $S$ is empty.
\end{proposition}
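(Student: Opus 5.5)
The plan is to rerun the local machinery of \cref{prop:charts} through \cref{prop:area-rigidity} for the non-intrinsic metric $d$, and then globalize. The main point is that the chart construction never really needed the length property of $X$. Fix $x\in X$. As in the proof of \cref{prop:charts}, take a small Jordan domain $\Omega$ around $x$ with rectifiable boundary, using \cite[Lemma~5.1]{CreutzRomney}, which is stated for general metric surfaces. Give $\Omega$ its intrinsic metric and call the result $Y$. Lengths of curves in $Y$ coincide with $d$-lengths, and $\HH^2_Y\geq\HH^2_X$ on $\Omega$ with equality where the intrinsic metric is locally comparable to $d$. Since $d\leq d_Y$, we have $B^Y_r(y)\subset B^X_r(y)$; this is exactly the direction needed to transport \eqref{eq:main-ball} to $Y$, at least for the area notion used in \eqref{eq:A-upper}.

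The delicate step, and the one I expect to be the main obstacle, is the comparison of Hausdorff measures. Rather than proving $\HH_Y^2=\HH_X^2$, I would verify directly the two properties actually used later: the area inequality $\int_E f^2\leq\HH^2_X(\Phi(E))$ and the star bound \eqref{eq:A-upper}. The radial star lies in $B^X_r$ by $d$-lengths alone. The metric differential of a Sobolev map into $X$ or into $Y$ is computed from curve lengths, so $\apmd$ is the same in both, and Kirchheim's area formula gives the area inequality. With this, \cref{prop:positivity} through \cref{lem:viscosity} go through verbatim, since they use only \eqref{eq:A-upper}. Thus $W=\log f$ satisfies \eqref{eq:target} on charts of $X$.

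Next I would identify $(D,\tilde d)$ with $(\Phi(D),d_{\mathrm{length}})$ away from the cusp set $S$. I would repeat \cref{lem:short} and \cref{lem:area}. In \cref{prop:area-rigidity} the images of the $Y$-balls lie in $d$-balls, since $d\leq d_{\mathrm{length}}$. The ball bound \eqref{eq:main-ball} for $d$ then gives the area contradiction unchanged, so $\Phi$ is a local isometry onto $d_{\mathrm{length}}$ and $d_{\mathrm{length}}$ is locally $\CBB(\kappa)$ off $\Phi(S)$. By \cref{lem:cusps}, $S$ is discrete and is empty for $\kappa\geq0$.

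At a cusp point, $\mu(\{z_*\})\geq2\pi$. Here I would show via Reshetnyak's theory that curves ending at $z_*$ have infinite $\int e^W\,ds$ when the mass exceeds $2\pi$, or when it equals $2\pi$ with the hyperbolic-cusp profile. For points where $\tilde d$ stays finite, I would argue exactly as in \cref{sec:cusps}: they are not cusps in the metric sense, so they are not removed, and the polygon and Petrunin argument applies. So I would define $S$ as the set of points at infinite $d_{\mathrm{length}}$-distance from their punctured neighborhoods; no curve of finite length then joins them to anything else. Finally, $X\setminus S$ is connected because $S$ is discrete in a connected surface. Any two of its points are joined by a rectifiable path avoiding $S$, obtained by chaining charts, so $d_{\mathrm{length}}$ is finite there. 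Local $\CBB(\kappa)$ then holds on $X\setminus S$.
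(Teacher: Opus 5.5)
There is a genuine gap at the first step, the construction of the chart. Passing to the intrinsic metric of a Jordan domain $\Omega$ is exactly where the proof of \cref{prop:charts} uses that $X$ is a length space. \cite[Lemma~2.1]{LWCanonical} makes $(\Omega,d_Y)$ a compact geodesic disc with the original topology only because $d_Y$ agrees locally with $d$ in the interior. For a general metric surface, the induced length metric on $\Omega$ need not be finite. Even where it is finite, it need not induce the topology of $X$, and a priori this is part of what the proposition asserts.

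The cusp example at the end of the appendix shows this failure. Any Jordan domain around $p$, with its intrinsic metric, has $p$ at infinite distance from all other points. So $Y$ is not a disc, the results of \cite{MW,LWCanonical} do not apply, and you get no chart whose image contains $p$. Your later analysis of a cusp $z_*$ inside a chart therefore has no chart to work in.

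There is a second problem. The facts that the energy minimizer is a homeomorphism and is quasiconformal rest on the bound $\HH_Y^2(B_r^Y(y))\le Cr^2$. From $d\le d_Y$ you only get $\HH_Y^2\ge\HH_X^2$, which is the wrong direction. You handle the measure comparison for \eqref{eq:A-upper} and for the area inequality, but not for the chart construction itself.

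The missing idea is to uniformize $(X,d)$ directly, with no intrinsic metric at all. Locally, \eqref{eq:main-ball} gives $\HH_X^2(B_r^X(y))\le Cr^2$ in the original metric. The local form of \cite[Theorem~1.6]{Rajala}, together with \cite[Theorem~1.4]{Rajala}, then yields a geometrically quasiconformal homeomorphism $\Phi:D\to U$ onto a neighborhood of $x$ in the original topology, also when $x$ is a cusp. This gives $\Phi\in W^{1,2}_{\loc}(D,X)$. Property (ET) makes $\apmd\Phi$ a measurable field of inner products, nondegenerate almost everywhere and of bounded eccentricity. The measurable Riemann mapping theorem then turns $\Phi$ into a conformal chart, and Lusin's property (N) for $\Phi^{-1}$ gives $f>0$ almost everywhere.

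From this point your outline matches the paper:
\begin{itemize}
\item running \cref{sec:radial,sec:representative} unchanged;
\item comparing $(D,\tilde d)$ with $(U,d_{\mathrm{length}})$ via the shortness and area-rigidity arguments;
\item splitting according to whether $z_*$ is at finite or infinite $\tilde d$-distance, using the polygon and globalization argument in the finite case.
\end{itemize}
Your Reshetnyak-type claim about when $\int e^W\dd s$ diverges at $z_*$ is not needed, since only this dichotomy is used.
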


\begin{proof}
Fix $x\in X$. The ball-area bound gives a quadratic upper area bound on
compact subsets. Hence the local form of \cite[Theorem~1.6]{Rajala},
together with \cite[Theorem~1.4]{Rajala}, provides a neighborhood
$U$ of $x$ and a geometrically quasiconformal homeomorphism
\[
 \Phi:D\longrightarrow U .
\]
By the analytic characterization of quasiconformality,
\cite[Theorem~6.18]{NtalampekosSurvey}, we have
$\Phi\in W^{1,2}_{\loc}(D,X)$. Property~\textup{(ET)} implies that
its approximate metric differentials form a measurable field of
inner-product seminorms. These seminorms are nondegenerate almost
everywhere by \cite[Lemma~14.2]{Rajala}, and quasiconformality gives
a uniform bound on their eccentricity. Applying the measurable
Riemann mapping theorem as in \cite[Section~14]{Rajala} and changing
coordinates, we may therefore assume that
\[
 \apmd\Phi_z=f(z)|\cdot|
 \qquad\text{for almost every }z\in D,
\]
where $f\in L^2_{\loc}(D)$. After restricting
$D$, the length identity for $2$-almost every curve and the one-sided
area formula in \cref{prop:charts} hold exactly as before. Moreover,
$\Phi^{-1}$ has Lusin's property~\textup{(N)} by
\cite[Theorem~6.20]{NtalampekosSurvey}. The final argument in the proof
of \cref{prop:charts} therefore shows that $f>0$ almost everywhere.

The arguments of \cref{sec:radial,sec:representative} use only these
properties of $\Phi$, the ball-area bound, and injectivity, and hence
apply without change. Thus \cref{prop:positivity} gives
$f^{-1}\in L^\infty_{\loc}(D)$, and we obtain the representative
$W=\log f$ satisfying \eqref{eq:curvature-measure}.

Set $z=\Phi^{-1}(x)$. By \cref{lem:cusps}, after restricting $D$
we may assume that the set of cusps is empty or consists only of $z$.
Moreover, no cusps occur for $\kappa\geq0$.

As before, we define  the conformal metric $\tilde d$ by 
\eqref{eq:conformal-distance}.  However, if $z$  is a cusp, the $\tilde d$-distance of any other point in $D$  to $z$ might be infinite.
On the complement of $z$ the finite metric  $\tilde d$  is locally $\CBB(\kappa)$. 
Thus, the resulting extended metric space $Y=(D,\tilde d)$ has either two finite-distance components, $D\setminus\{z\}$ and $\{z\}$, or a single finite-distance component.  In the second case, we see in the same way as at the end of Section \ref{sec:conclusion}, that all of $Y$ is locally $\CBB(\kappa)$.  (Note that in this case $z$ cannot be a cusp).

In both cases, the same argument as in Section \ref{sec:identification} shows that the map $\Phi:Y\to U$ is a bijection, which does not increase length of rectifiable curves and preserves the area.  The same argument as in the proof of Proposition \ref{prop:area-rigidity} shows that $\Phi^{-1}$  does not increase length of curves as well. 

We conclude that $Y$ and $(U,d_{\mathrm{length}})$ are locally isometric $\CBB(\kappa)$ spaces if $z$ is not a cusp and that $U\setminus \{x\}$ is a locally $\CBB(\kappa)$ space with respect to its length metric, if $z$ is a cusp.  In the second case, length preservation shows that $x$ is at infinite $d_{\mathrm{length}}$-distance from any point of $X\setminus\{x\} $.

We finish the proof defining $S\subset X$ to be the set of points which are not connected to any other point of $X$ by a curve of finite length.  The above considerations show that $S$ is discrete, that $S$ is empty if $\kappa \geq 0$ and that $X\setminus S$ is locally $\CBB(\kappa)$ with respect to the induced length metric. 
\end{proof}

For negative curvature the induced length metric need not be finite.
Consider the hyperbolic cusp
\[
 C=(0,\infty)\times(\R/\mathbb Z),\qquad
 ds^2=dt^2+e^{-2t}d\theta^2,
\]
with intrinsic distance $d_C$, and set $h(t,\theta)=e^{-t/2}$.
On $X=C\cup\{p\}$ define
\[
 d(p,x)=h(x),\qquad
 d(x,y)=\min\{d_C(x,y),h(x)+h(y)\}.
\]
It is not  difficult to see that the topological disc $X$ satisfies the assumptions of 
Proposition \ref{prop:app}  while the induced length metric on $X$ consists of two components $C$ and $\{p\}$.

\end{document}